\newtheorem{theorem}{Theorem}
\theoremstyle{plain}
\newtheorem{conjecture}{Conjecture}
\newtheorem{lemma}{Lemma}
\newtheorem{remark}{Remark}
\numberwithin{equation}{section}
\begin{document}
\title[Uniqueness Results Revisited]{Uniqueness Results on a geometric PDE in Riemannian and CR Geoemetry Revisited}
\author{Xiaodong Wang}
\address{Department of Mathematics, Michigan State University, East Lansing, MI 48824}
\email{xwang@math.msu.edu}

\begin{abstract}
We revisit some uniqueness results for a geometric nonlinear PDE related to
the scalar curvature in Riemannian geometry and CR geometry. In the Riemannian
case we give a new proof of the uniquness result assuming only a positive
lower bound for Ricci curvature. We apply the same principle in the CR case
and reconstruct the Jerison-Lee identity in a more general setting. As a
consequence we prove a stronger uniqueness result in the CR case. We also
discuss some open problems for further study.

\end{abstract}
\maketitle

\section{\bigskip Introduction}

Let $\left(  \Sigma^{n},g\right)  $ be a Riemannian manifold and
$\widetilde{g}=u^{4/\left(  n-2\right)  }g$ another metric conformal to $g$,
where $u$ is a positive smooth function on $\Sigma$. The scalar curvatures are
related by the following equation%
\[
-\frac{4\left(  n-1\right)  }{n-2}\Delta_{g}u+Ru=\widetilde{R}u^{\left(
n+2\right)  /\left(  n-2\right)  }.
\]
Let $\left(  \mathbb{S}^{n},g_{c}\right)  $ be the sphere with the standard
metric. A conformal metric $\widetilde{g}=u^{4/\left(  n-2\right)  }g_{c}$ has
constant scalar curvature $n\left(  n-1\right)  $ iff
\begin{equation}
-\frac{4}{n\left(  n-2\right)  }\Delta u+u=u^{\left(  n+2\right)  /\left(
n-2\right)  },\text{ on }\mathbb{S}^{n}. \label{sons}%
\end{equation}
Conformal diffeomorphisms of $\mathbb{S}^{n}$ give rise to a natural family of
solutions to the above equation%
\[
u_{t,\xi}\left(  x\right)  =\left(  \cosh t+\left(  \sinh t\right)  x\cdot
\xi\right)  ^{-\left(  n-2\right)  /2},
\]
where $t\geq0,\xi\in\mathbb{S}^{n}$. It is a remarkable theorem that these are
all the positive solutions to (\ref{sons}). There are now several proofs for
this theorem. Analytically, by the stereographic projection (\ref{sons}) is
equivalent to the following equation%
\[
-\Delta v=\frac{n\left(  n-2\right)  }{4}v^{\left(  n+2\right)  /\left(
n-2\right)  }\text{ on }\mathbb{R}^{n}%
\]
whose positive solutions were classified by Gidas-Ni-Nirenberg \cite{GNN}
using the moving plane method. Geometrically, it follows from the following
more general theorem of Obata.

\begin{theorem}
(\cite{O2}) Suppose $\left(  \Sigma^{n},\overline{g}\right)  $ is a closed
Einstein manifold and $g=\phi\overline{g}$ is a conformal metric with constant
scalar curvature, where $\phi$ is a positive smooth function. Then $\phi$ must
be constant unless $\left(  \Sigma^{n},\widetilde{g}\right)  $ is isometric to
the standard sphere $\left(  \mathbb{S}^{n},g_{c}\right)  $ up to a scaling
and $\phi$ corresponds to the following function on $\mathbb{S}^{n}$%
\[
\phi\left(  x\right)  =c\left(  \cosh t+\sinh tx\cdot a\right)  ^{-2}%
\]
for some $c>0,t\geq0$ and $a\in\mathbb{S}^{n}$.
\end{theorem}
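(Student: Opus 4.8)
The plan is to show that the constant-scalar-curvature metric $g$ is forced to be Einstein, and in fact that a suitable power of the conformal factor satisfies the Obata (concircular) equation, after which the rigidity is classical. Throughout I take all covariant derivatives, the Laplacian $\Delta$, the inner products and the volume form $dV$ with respect to $g$, and I write $\mathring{\mathrm{Ric}}_g = \mathrm{Ric}_g - \frac{R_g}{n}g$ for the trace-free Ricci tensor of $g$, with $(\,\cdot\,)^{\circ}$ denoting the $g$-trace-free part of a symmetric $2$-tensor. Set $h = \phi^{1/2}>0$, so that $\overline{g} = \phi^{-1}g = h^{-2}g$.

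First I would record the conformal transformation law for the trace-free Ricci tensor. Writing $\overline{g}=e^{2\sigma}g$ with $\sigma = -\tfrac12\log\phi = -\log h$, the standard formula gives
\[
\mathring{\mathrm{Ric}}_{\overline{g}} = \mathring{\mathrm{Ric}}_g - (n-2)\big(\nabla^2\sigma - d\sigma\otimes d\sigma\big)^{\circ}.
\]
Since $\nabla^2 e^{-\sigma} = -e^{-\sigma}\big(\nabla^2\sigma - d\sigma\otimes d\sigma\big)$ and $e^{-\sigma}=h$, one has $\nabla^2\sigma - d\sigma\otimes d\sigma = -h^{-1}\nabla^2 h$. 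As $\overline{g}$ is Einstein, $\mathring{\mathrm{Ric}}_{\overline{g}}=0$, and therefore
\[
\mathring{\mathrm{Ric}}_g = -(n-2)\,h^{-1}\,(\nabla^2 h)^{\circ}. \qquad (\ast)
\]

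The crux is an integral identity pairing $(\ast)$ with $\mathring{\mathrm{Ric}}_g$. On one hand, $(\ast)$ gives pointwise $\langle \mathring{\mathrm{Ric}}_g, (\nabla^2 h)^{\circ}\rangle = -\tfrac{1}{n-2}\,h\,|\mathring{\mathrm{Ric}}_g|^2$. On the other hand, because $\mathring{\mathrm{Ric}}_g$ is trace-free, $\langle \mathring{\mathrm{Ric}}_g, (\nabla^2 h)^{\circ}\rangle = \langle \mathrm{Ric}_g, \nabla^2 h\rangle - \tfrac{R_g}{n}\Delta h$; integrating over the closed manifold $\Sigma$, the term $\int_{\Sigma}\Delta h = 0$ drops out, and integration by parts with the contracted second Bianchi identity $\nabla^i (\mathrm{Ric}_g)_{ij} = \tfrac12\nabla_j R_g$ yields $\int_{\Sigma}\langle \mathrm{Ric}_g,\nabla^2 h\rangle = -\tfrac12\int_{\Sigma}\langle \nabla R_g,\nabla h\rangle = 0$, where the constancy of $R_g$ is used decisively. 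Comparing the two computations forces $\frac{1}{n-2}\int_{\Sigma} h\,|\mathring{\mathrm{Ric}}_g|^2\,dV = 0$, and since $h>0$ we conclude $\mathring{\mathrm{Ric}}_g\equiv 0$; by $(\ast)$ also $(\nabla^2 h)^{\circ}\equiv 0$, i.e. $\nabla^2 h = \frac{\Delta h}{n}g$. Thus $g$ is itself Einstein and $h$ is a concircular function.

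Finally I would invoke the rigidity for the equation $\nabla^2 h = \frac{\Delta h}{n}g$. If $h$ is constant then $\phi$ is constant, the first alternative. If $h$ is non-constant, then on the closed Einstein manifold $(\Sigma,g)$, writing $\mathrm{Ric}_g=(n-1)k\,g$, one takes a divergence and commutes derivatives using $\Delta(\nabla h) = \nabla(\Delta h) + \mathrm{Ric}_g(\nabla h)$ to obtain $\frac{\Delta h}{n} = -kh + \text{const}$; after an additive normalization $h$ solves Obata's equation $\nabla^2 h = -k h\,g$, which on a closed manifold admits a non-constant solution only when $k>0$, in which case $(\Sigma,g)$ is, up to scaling, the round sphere $(\mathbb{S}^n,g_c)$ with $h$ the restriction of an affine function on $\mathbb{R}^{n+1}$. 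Since $\overline{g}$ is conformal to $g$ it is then also standard, and in the coordinates where $\overline{g}=g_c$ the relation $\phi = h^2$ recovers the stated form $\phi(x)=c(\cosh t + \sinh t\,x\cdot a)^{-2}$. I expect the main obstacle to be the middle step: arranging the signs in the conformal law for $\mathring{\mathrm{Ric}}$ so that the cross term integrates to zero via the Bianchi identity, together with the endgame of converting the concircular equation into Obata's equation and matching $\phi$ precisely to $c(\cosh t + \sinh t\,x\cdot a)^{-2}$.
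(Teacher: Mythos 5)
Your proposal is correct and follows essentially the same route the paper attributes to Obata: your identity $(\ast)$ is exactly the quoted formula $\overline{T}=T+(n-2)\phi^{-1}\left(D^{2}\phi-\frac{\Delta\phi}{n}g\right)$ (with $h=\phi^{1/2}$ fixing the normalization), computed with respect to the unknown constant-scalar-curvature metric $g$, followed by the classical pairing with $T$, integration by parts via the contracted Bianchi identity, and the concircular-equation rigidity of Obata. The only step you leave implicit is the final bookkeeping that the affine function $h$ in $g$-round coordinates becomes, after transporting everything by the conformal diffeomorphism that identifies $\left(\Sigma,\overline{g}\right)$ with $\left(\mathbb{S}^{n},g_{c}\right)$, the stated inverse-square form of $\phi$.
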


Obata's proof is short and elegant and is based on the following formula
\[
\overline{T}=T+\left(  n-2\right)  \phi^{-1}\left(  D^{2}\phi-\frac{\Delta
\phi}{n}g\right)  ,
\]
where $T$ and $\overline{T}$ are the traceless Ricci tensor of $g$ and
$\overline{g}$, respectively. But this argument is quite subtle as it requires
using the unknown metric $g=\phi\overline{g}$ as the background metric instead
of the given Einstein metric $\overline{g}$.

There is parallel story in CR geometry. Let $M^{2m+1}$ be a CR manifold and
$\widetilde{\theta}=f^{2/m}\theta$ two pseudohermitian structures. The
pseudohermitian scalar curvatures of $\theta$ and $\widetilde{\theta}$ are
related by the following formula%
\[
-\frac{2\left(  m+1\right)  }{m}\Delta_{b}f+Rf=\widetilde{R}f^{\left(
m+2\right)  /m}.
\]
On $\mathbb{S}^{2m+1}=\left\{  z\in\mathbb{C}^{m+1}:\left\vert z\right\vert
=1\right\}  $ the canonical pseudohermitian structure $\theta_{c}=\left(
2\sqrt{-1}\overline{\partial}\left\vert z\right\vert ^{2}\right)
|_{\mathbb{S}^{2m+1}}$ satisfies $R_{\alpha\overline{\beta}}=\left(
m+1\right)  /2\delta_{\alpha\beta}$ and $R=m\left(  m+1\right)  /2$. Therefore
$\theta=f^{2/m}\theta_{c}$ has scalar curvature $m\left(  m+1\right)  /2$ iff
\begin{equation}
-\frac{4}{m^{2}}\Delta_{b}f+f=f^{\left(  m+2\right)  /m}\text{ on }%
\mathbb{S}^{2m+1}.\label{cres}%
\end{equation}
Pseudoconformal diffeomorphisms of $\mathbb{S}^{2m+1}$ yield a natural family
of solutions to the above equation%
\[
f_{t,\xi}\left(  z\right)  =\left\vert \cosh t+\left(  \sinh t\right)
z\cdot\overline{\xi}\right\vert ^{-1/m}.
\]
It is a remarkable result of Jerison and Lee \cite{JL} that these are all the
positive solutions of (\ref{cres}). The proof is based on a highly nontrivial
identity on $\left(  \mathbb{S}^{2m+1},\phi\theta_{c}\right)  $, $\phi
=f^{2/m}$
\begin{align}
&  \operatorname{Re}\left(  gD_{\alpha}+\overline{g}E_{\alpha}-3\phi_{0}%
\sqrt{-1}U_{\alpha}\right)  _{,\overline{\alpha}}\label{JL}\\
&  =\left(  \frac{1}{2}+\frac{1}{2}\phi\right)  \left(  \left\vert
D_{\alpha\beta}\right\vert ^{2}+\left\vert E_{\alpha\overline{\beta}%
}\right\vert ^{2}\right)  \nonumber\\
&  +\phi\left[  \left\vert D_{\alpha}-U_{\alpha}\right\vert ^{2}+\left\vert
U_{\alpha}+E_{\alpha}-D_{\alpha}\right\vert ^{2}+\left\vert U_{\alpha
}+E_{\alpha}\right\vert ^{2}+\left\vert \phi^{-1}\phi_{\overline{\gamma}%
}D_{\alpha\beta}+\phi^{-1}\phi_{\beta}E_{\alpha\overline{\gamma}}\right\vert
^{2}\right]  .\nonumber
\end{align}
where%
\begin{align*}
D_{\alpha\beta} &  =\phi^{-1}\phi_{\alpha,\beta},D_{\alpha}=\phi^{-1}%
\phi_{\overline{\beta}}D_{\alpha\beta},E_{\alpha}=\phi^{-1}\phi_{\gamma
}E_{\alpha\overline{\gamma}},\\
E_{\alpha\overline{\beta}} &  =\phi^{-1}\phi_{\alpha,\overline{\beta}}%
-\phi^{-2}\phi_{\alpha}\phi_{\overline{\beta}}-\frac{1}{2}\phi^{-1}\left(
g-\phi\right)  \delta_{\alpha\overline{\beta}},\\
U_{\alpha} &  =\frac{2}{m+2}D_{\alpha\beta,\overline{\beta}},g=\frac{1}%
{2}+\frac{1}{2}\phi+\phi^{-1}\left\vert \partial\phi\right\vert ^{2}+i\phi
_{0}.
\end{align*}
Here and throughout this paper we always work with a local unitary frame
$\{T_{\alpha}:\alpha=1,\cdots,m\}$ for $T^{1,0}M$ and $T_{0}=T$ is the Reeb
vector field. It should be emphasized that in all these formulas covariant
derivatives are calculated w.r.t. the unknown pseudoconformal structure
$\phi\theta_{c}$.

The Jerison-Lee identity is in fact valid on any closed Einstein
pseudohermitian manifold. Here by Einstein we mean $R_{\alpha\overline{\beta}%
}=\rho\delta_{\alpha\beta}$ and $A_{\alpha\beta}=0$ (torsion-free). The
following more general uniqueness result, which is the analogue of the Obata
theorem, was proved in \cite{W}.

\begin{theorem}
(\cite{W}) Let $\left(  M^{2m+1},\overline{\theta}\right)  $ be a closed
Einstein pseudohermitian manifold. Suppose $\theta=\phi\overline{\theta}$ is
another pseudohermitian structure with constant pseudohermitian scalar
curvature. Then $\phi$ must be constant unless $\left(  M^{2m+1}%
,\overline{\theta}\right)  $ is CR isometric to $\left(  \mathbb{S}%
^{2m+1},\theta_{c}\right)  $ up to a scaling and $\phi$ corresponds to the
following function on $\mathbb{S}^{2m+1}$%
\[
\phi\left(  z\right)  =c_{m}\left\vert \cosh t+\left(  \sinh t\right)
z\cdot\overline{\xi}\right\vert ^{-2}%
\]
for some $t\geq0$ and $\xi\in\mathbb{S}^{2m+1}$.
\end{theorem}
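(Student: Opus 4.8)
The plan is to mirror Obata's Riemannian strategy via the Jerison–Lee identity, which the author states holds on any closed Einstein pseudohermitian manifold. So first I would take $\left(M^{2m+1},\overline{\theta}\right)$ Einstein and $\theta=\phi\overline{\theta}$ with constant pseudohermitian scalar curvature, and rewrite the scalar-curvature equation relating $\theta$ and $\overline{\theta}$ to pin down the nonlinear PDE that $\phi$ satisfies (after suitable normalization of the constant curvature, matching the model computation $R_{\alpha\overline{\beta}}=(m+1)/2\,\delta_{\alpha\beta}$, $R=m(m+1)/2$ on the sphere). The key point is that all the covariant derivatives in the identity are taken with respect to the \emph{unknown} structure $\theta=\phi\overline{\theta}$, exactly as in Obata's argument one uses the unknown metric as background.

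Second, I would integrate the Jerison–Lee identity (\ref{JL}) over the closed manifold $M$. The left-hand side is a divergence, $\operatorname{Re}\left(\cdots\right)_{,\overline{\alpha}}$, so by the pseudohermitian divergence theorem its integral against the volume form of $\theta$ vanishes. What survives is the integral of the right-hand side, which is a sum of manifestly nonnegative terms: $\left(\tfrac12+\tfrac12\phi\right)\left(\left\vert D_{\alpha\beta}\right\vert^{2}+\left\vert E_{\alpha\overline{\beta}}\right\vert^{2}\right)$ plus $\phi$ times a sum of squared norms. Since $\phi>0$ everywhere, every coefficient is strictly positive, so each term must vanish identically. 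In particular $D_{\alpha\beta}=\phi^{-1}\phi_{\alpha,\beta}=0$ and $E_{\alpha\overline{\beta}}=0$ throughout $M$.

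Third, I would extract the geometric consequences of these vanishing tensors. From $D_{\alpha\beta}=0$ we get $\phi_{\alpha,\beta}=0$, and combining with $E_{\alpha\overline{\beta}}=0$ we obtain a Hessian-type equation of the form $\phi_{\alpha,\overline{\beta}}=\left(\text{scalar}\right)\delta_{\alpha\beta}$ controlling the full complex Hessian of $\phi$. The cascade $D_{\alpha\beta}=0\Rightarrow D_{\alpha}=0$, $E_{\alpha\overline{\beta}}=0\Rightarrow E_{\alpha}=0$, and the vanishing of $U_{\alpha}=\tfrac{2}{m+2}D_{\alpha\beta,\overline{\beta}}$ then forces strong rigidity. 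If $\phi$ is nonconstant, the function $\phi$ (together with its behavior along the Reeb direction encoded in $\phi_{0}$) satisfies a closed system of second-order equations whose solution space is finite-dimensional; the standard route is to show that $\log\phi$ or an associated potential realizes $M$ as a CR manifold admitting a nontrivial pseudohermitian transformation, which by the CR analogue of Obata's rigidity forces $\left(M,\overline{\theta}\right)$ to be CR isometric to $\left(\mathbb{S}^{2m+1},\theta_{c}\right)$, with $\phi$ of the asserted form $\phi(z)=c_{m}\left\vert\cosh t+\left(\sinh t\right)z\cdot\overline{\xi}\right\vert^{-2}$.

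The main obstacle I expect is the last step: deducing the explicit sphere classification and the precise form of $\phi$ from the vanishing tensor equations. Integrating the divergence identity to kill all the quadratic terms is essentially automatic once positivity of the coefficients is checked, but converting the resulting overdetermined Hessian system into the rigidity statement — recovering the defining function of the sphere and identifying the pseudoconformal parameters $t$ and $\xi$ — requires the genuinely CR-geometric input. In the Riemannian case Obata handles the analogous step by analyzing the Hessian equation $D^{2}\phi=\tfrac{\Delta\phi}{n}g$, whose nonconstant solutions on a closed Einstein manifold force the sphere; here the additional Reeb direction and the torsion-free (Einstein) hypothesis must be used carefully to close the argument, and verifying that the only nonconstant solutions arise from $\mathbb{S}^{2m+1}$ is where the real work lies.
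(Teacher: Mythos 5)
Your proposal is correct and follows essentially the same route as the proof the paper attributes to \cite{W}: integrate the Jerison--Lee identity (\ref{JL}) (valid on any closed Einstein pseudohermitian manifold, with all covariant derivatives taken with respect to the unknown structure $\theta=\phi\overline{\theta}$) over the closed manifold, use positivity of $\phi$ to force $D_{\alpha\beta}=E_{\alpha\overline{\beta}}=0$, and then analyze the resulting overdetermined system to obtain the sphere rigidity and the explicit form of $\phi$. The two ingredients you treat as black boxes --- the validity of (\ref{JL}) in the Einstein setting (which requires the curvature transformation formulas and Bianchi identities the paper alludes to) and the final classification step --- are precisely the technical content of \cite{W}, to which the paper itself also defers.
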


We note that like the Obata argument all calculations have to be carried out
with respect to the unknown pseudehermitian structure $\theta=\phi
\overline{\theta}$. Complicated formulas relating the curvature tensors of
$\theta$ and $\overline{\theta}$ as well as various Bianchi identities are
also heavily used in the proof.

The Jerison-Lee identity is truly remarkable and a better understanding is
highly desirable. In this paper, we propose a different approach to
reconstruct the formula. The basic idea is to study the model case carefully
and then come up with the right quantities to apply the maximum principle. We
first revisit the Riemannian case and give a new(?) proof of the uniqueness
results. In fact, this new proof does not require the Einstein condition. A
positive lower bound for Ricci curvature suffices. Suppose $\left(
M^{n},g\right)  $ is a compact Riemannian manifold with $Ric\geq n-1$ and
$u\in C^{\infty}\left(  M\right)  $ is \textbf{positive} and satisfies the
following equation%
\[
-\Delta u+\frac{n\left(  n-2\right)  }{4}u=\frac{n\left(  n-2\right)  }%
{4}u^{\left(  n+2\right)  /\left(  n-2\right)  }.
\]
If we write $u=v^{-\left(  n-2\right)  /2}$, then $v$ satisfies
\[
\Delta v=\frac{n}{2}v^{-1}\left(  \left\vert \nabla v\right\vert ^{2}%
+1-v^{2}\right)  .
\]
By the study of the model case, we consider \thinspace$\phi=v^{-1}\left(
\left\vert \nabla v\right\vert ^{2}+v^{2}+1\right)  $. A simple calculation
shows that
\[
\Delta\phi\geq\left(  n-2\right)  \left\langle \nabla\log v,\nabla
\phi\right\rangle
\]
and therefore the maximum principle comes into play. This simple argument
yields the following result which is more general than Obata's theorem.

\begin{theorem}
Let $\left(  M^{n},g\right)  $ be a smooth compact Riemannian manifold with a
(possibly empty) convex boundary. Suppose $u\in C^{\infty}\left(  M\right)  $
is a positive solution of the following equation%
\[%
\begin{array}
[c]{ccc}%
-\Delta u+\lambda u=u^{\left(  n+2\right)  /\left(  n-2\right)  } & \text{on}
& M,\\
\frac{\partial u}{\partial\nu}=0 & \text{on} & \partial M,
\end{array}
\]
where $\lambda>0$ is a constant. If $Ric\geq\left(  n-1\right)  g$ and
$\lambda\leq n\left(  n-2\right)  /4$, then $u$ must be constant unless
$\lambda=n\left(  n-2\right)  /4$ and $\left(  M,g\right)  $ is isometric to
$\left(  \mathbb{S}^{n},g_{c}\right)  $ or $\left(  \mathbb{S}_{+}^{n}%
,g_{c}\right)  $. In the latter case $u$ is given on $\mathbb{S}^{n}$ or
$\mathbb{S}_{+}^{n}$\ by the following formula%
\[
u(x)=c_{n}\left(  \cosh t+\left(  \sinh t\right)  x\cdot\xi\right)  ^{-\left(
n-2\right)  /2}.
\]
for some $t\geq0$ and $\xi\in\mathbb{S}^{n}$.
\end{theorem}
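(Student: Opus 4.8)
The plan is to carry out the strategy outlined in the introduction, now for every $\lambda\le n(n-2)/4$ and in the presence of a convex boundary. First I would set $u=v^{-(n-2)/2}$ and, after a constant rescaling of $v$, reduce the problem to the normalized equation
\[
\Delta v=\frac{n}{2}v^{-1}\left(|\nabla v|^{2}+1\right)-\kappa v,\qquad \kappa:=\frac{2\lambda}{n-2}\le\frac{n}{2},
\]
with the Neumann condition $\partial v/\partial\nu=0$ on $\partial M$. Since $u$ is bounded between positive constants, so is $v$; moreover $u$ is constant precisely when $v$ is, and $\kappa=n/2$ corresponds exactly to the critical value $\lambda=n(n-2)/4$ and to the equation recalled in the introduction.

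The decisive choice is the auxiliary function. The model solutions $v=\cosh t+(\sinh t)\,x\cdot\xi$ on $\mathbb{S}^{n}$ satisfy $\nabla^{2}v=-(v-\cosh t)g$ and make $v^{-1}(|\nabla v|^{2}+v^{2}+1)$ constant, so I would take
\[
\phi:=v^{-1}\left(|\nabla v|^{2}+\frac{2\kappa}{n}v^{2}+1\right).
\]
I would then compute $\Delta\phi$ by feeding in the Bochner formula for $\Delta|\nabla v|^{2}$, the normalized equation for $\Delta v$ and for $\langle\nabla v,\nabla(\Delta v)\rangle$, and finally the hypotheses $\mathrm{Ric}\ge(n-1)g$ and the trace inequality $|\nabla^{2}v|^{2}\ge(\Delta v)^{2}/n$. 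The undifferentiated cross term $\langle\nabla v,\nabla|\nabla v|^{2}\rangle$ is exactly cancelled by choosing the drift to be $(n-2)\nabla\log v$, and with the $\phi$ above every remaining term either cancels identically or assembles into a single nonnegative multiple of $v^{-1}|\nabla v|^{2}$. The target inequality is
\[
\Delta\phi\ge(n-2)\langle\nabla\log v,\nabla\phi\rangle+2(n-1)\left(1-\frac{2\kappa}{n}\right)v^{-1}|\nabla v|^{2},
\]
whose last term is nonnegative precisely because $\lambda\le n(n-2)/4$.

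Next I would apply the maximum principle to the operator $L:=\Delta-(n-2)\langle\nabla\log v,\nabla\,\cdot\,\rangle$, which has smooth coefficients and no zeroth-order term. As $M$ is compact, $\phi$ attains its maximum; at an interior maximum the strong maximum principle forces $\phi$ to be constant, while at a boundary maximum I would use $\partial v/\partial\nu=0$ to compute $\partial\phi/\partial\nu=-2v^{-1}\,\mathrm{II}(\nabla v,\nabla v)\le0$ by convexity, so that Hopf's lemma again forces $\phi$ constant. Once $\phi$ is constant the displayed inequality becomes an identity. If $\lambda<n(n-2)/4$ the surviving term must vanish, giving $\nabla v\equiv0$ and hence $u$ constant. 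If $\lambda=n(n-2)/4$, equality forces both $\mathrm{Ric}(\nabla v,\nabla v)=(n-1)|\nabla v|^{2}$ and the rigidity case $\nabla^{2}v=\frac{\Delta v}{n}g$ of the trace inequality; combining $\phi\equiv\mathrm{const}$ with the equation yields $\Delta v=\frac{n\phi}{2}-nv$, so $w:=v-\phi/2$ satisfies $\nabla^{2}w=-wg$.

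The final and most delicate step is the rigidity. For nonconstant $w$ with $\nabla^{2}w=-wg$ I would invoke Obata's theorem in the closed case to conclude that $(M,g)$ is the unit round sphere, and its analogue for manifolds with convex boundary to conclude that $(M,g)$ is the hemisphere $\mathbb{S}_{+}^{n}$; in either case $w$ is the restriction of a linear function, $w=b\,x\cdot\xi$, whence $v=c+b\,x\cdot\xi$ with $c>|b|$, and undoing the substitution and scaling produces exactly the stated family $u(x)=c_{n}(\cosh t+(\sinh t)\,x\cdot\xi)^{-(n-2)/2}$. I expect the main obstacle to lie not in the inequality for $\Delta\phi$---which, once $\phi$ is guessed, is essentially bookkeeping---but in two places: pinning down the coefficient $2(n-1)(1-2\kappa/n)$ so as to see that its sign switches exactly at $\lambda=n(n-2)/4$, and the boundary rigidity, where one must verify that convexity together with the Neumann condition forces $\partial M$ to be totally geodesic so that the hemisphere characterization applies.
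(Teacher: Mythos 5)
Your proposal is correct, and its core mechanism is exactly the paper's: the substitution $u=v^{-(n-2)/2}$, an auxiliary function of the form $v^{-1}\left(  \left\vert \nabla v\right\vert ^{2}+\cdots+1\right)  $, the Bochner formula combined with $\left\vert D^{2}v\right\vert ^{2}\geq\left(  \Delta v\right)  ^{2}/n$ and $Ric\geq\left(  n-1\right)  g$, and the strong maximum principle plus Hopf lemma, with the boundary sign $\frac{\partial\phi}{\partial\nu}=-2v^{-1}\Pi\left(  \nabla v,\nabla v\right)  \leq0$ coming from convexity and the Neumann condition. Where you genuinely differ is the treatment of subcritical $\lambda$: the paper runs the Bochner argument only at $\lambda=n\left(  n-2\right)  /4$ with $\phi=v^{-1}\left(  \left\vert \nabla v\right\vert ^{2}+v^{2}+1\right)  $, and then disposes of $\lambda<n\left(  n-2\right)  /4$ by rescaling the metric to $\widetilde{g}=\frac{4\lambda}{n\left(  n-2\right)  }g$, which makes the equation critical while turning the curvature bound strict, $Ric\left(  \widetilde{g}\right)  >\left(  n-1\right)  \widetilde{g}$, so the sphere/hemisphere alternative is excluded and $u$ must be constant. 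You instead absorb $\lambda$ into the auxiliary function, $\phi=v^{-1}\left(  \left\vert \nabla v\right\vert ^{2}+\frac{2\kappa}{n}v^{2}+1\right)  $ with $\kappa=\frac{2\lambda}{n-2}$, and your differential inequality is correct as stated: the surviving coefficient is $2\left(  n-1\right)  -4\kappa+\frac{4\kappa}{n}=2\left(  n-1\right)  \left(  1-\frac{2\kappa}{n}\right)  $, which is nonnegative exactly when $\lambda\leq n\left(  n-2\right)  /4$, so constancy of $\phi$ directly forces $\nabla v\equiv0$ in the subcritical range. Both routes are sound; yours is a unified one-pass argument that makes the threshold visible inside the Bochner computation, while the paper's keeps that computation minimal at the cost of the scaling reduction. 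One final point: the last rigidity step---that the over-determined system $D^{2}w=-wg$ with vanishing Neumann data and convex boundary forces $\left(  \mathbb{S}^{n},g_{c}\right)  $ or $\left(  \mathbb{S}_{+}^{n},g_{c}\right)  $---is asserted without proof in the paper as well (``it is easy to deduce''); your remark that one must verify $\nabla w$ lies in the kernel of the shape operator, so that $\partial M$ is effectively totally geodesic and a doubling or boundary-Obata argument applies, is precisely the detail that would have to be written out in either version.
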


The above theorem is actually not new. It is a special case of a theorem by
Bidaut-V\'{e}ron and V\'{e}ron \cite{BVV} and Ilias \cite{I}. Their method is
based on a sophisticated integration by parts which can handle the subcritical
case as well. We will say more about their result in the last section.

We apply the same principle to the CR case. Here the first difficulty is that
there is no natural first order quantity and therefore we have to go to the
2nd order. There are three natural tensors of order 2 to consider and we must
take a suitable contraction and combination to apply the maximum principle. As
our argument in the Riemannian case, this approach has the advantage that the
calculations are done on a fixed pseudohermitian manifold $\left(
\Sigma^{2m+1},\theta\right)  $ which does not have to be Einstein. The unknown
pseudohermitian structure $\theta=\phi\widetilde{\theta}$ and its curvature
tensor do not enter the discussion at all. All it takes is to do covariant
derivatives. Of course we are using a lot of hindsight from Jerison-Lee.
Besides the identity (\ref{JL}) Jerison and Lee \cite{JL} gave three
additional divergence formulas on the Heisenberg group. The formula we obtain
can be viewed as the generalization of their first formula ((4.2) in
\cite{JL}) to any pseudohermitian manifold with torision zero. (One can even
drop this condition, but the additional terms involving the torsion
$A_{\alpha\beta}$ and its divergence seem too complicated). The calculations
are still formidable. But we hope that this approach sheds more light on the
Jerison-Lee work. We do get a more general identity, see Theorem \ref{JLn}. As
a result we prove a stronger uniqueness theorem.

\begin{theorem}
Let $\left(  M^{2m+1},\theta\right)  $ be a closed pseudohermitian manifold
with $A_{\alpha\beta}=0$ and $R_{\alpha\overline{\beta}}\geq\frac{m+1}{2}$.
Suppose $f>0$ satisfies the following equation on $M$%
\[
-\Delta_{b}f+\lambda f=f^{\left(  m+2\right)  /m},
\]
where $\lambda>0$ is a constant. If $\lambda\leq m^{2}/4$, then $f$ is
constant unless $\lambda=m^{2}/4$ and $\left(  M,\theta\right)  $ is isometric
to $\left(  \mathbb{S}^{2m+1},\theta_{c}\right)  $ and in this case
\[
f=c_{m}\left\vert \cosh t+\left(  \sinh t\right)  z\cdot\overline{\xi
}\right\vert ^{-1/m}%
\]
for some $t>0,\xi\in\mathbb{S}^{2m+1}$.
\end{theorem}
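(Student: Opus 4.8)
The plan is to run the Riemannian argument one order higher. First I would kill the nonlinearity with the substitution $f = v^{-m}$, the exact CR analogue of $u = v^{-(n-2)/2}$ (both make the conformal factor $f^{2/m}=u^{4/(n-2)}$ equal to $v^{-2}$). A direct computation converts the given equation into
\[
\Delta_b v = (m+1)\,\frac{|\nabla_b v|^2}{v} - \frac{\lambda}{m}\,v + \frac{1}{m}\,v^{-1},
\]
with all covariant derivatives taken with respect to the \emph{fixed} structure $\theta$. In the Riemannian case the scalar $\phi = v^{-1}(|\nabla v|^2 + v^2 + 1)$ was almost subharmonic and the maximum principle closed the argument; in CR geometry the Bochner formula for $|\nabla_b v|^2$ throws off Reeb-direction terms that no first-order scalar can absorb, which is exactly why one must pass to second order.

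Next I would introduce the three natural second-order tensors of $v$ on $(M,\theta)$: the $(2,0)$ Hessian $v_{,\alpha\beta}$, the complex Hessian $v_{,\alpha\bar\beta}$, and the Reeb derivative $v_{,0}$. After subtracting traces and the appropriate lower-order corrections these become the Jerison--Lee quantities $D_{\alpha\beta}$, $E_{\alpha\bar\beta}$, $U_\alpha$ of \eqref{JL} (now written through $v$, with $\phi = f^{2/m} = v^{-2}$, rather than through $\phi\theta_c$). The goal is to reproduce the Jerison--Lee complex vector field of \eqref{JL}, call it $W_\alpha$, and to prove that its real divergence is a sum of squared norms of these tensors together with a curvature term and a lower-order term. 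This divergence identity is precisely Theorem \ref{JLn}, the extension of the first Jerison--Lee formula to torsion-free pseudohermitian manifolds.

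The identity is obtained by computing $\operatorname{Re} W_{\alpha,\bar\alpha}$ head-on: one uses the reduced equation and its first derivatives to eliminate the traces $\Delta_b v$, $v_{,\alpha\bar\alpha}$ and their derivatives, repeatedly commutes $\nabla_\alpha$ and $\nabla_{\bar\beta}$ (each commutation injecting $R_{\alpha\bar\beta}$ and, in general, the torsion $A_{\alpha\beta}$), and invokes the CR Bianchi identities to cancel the indefinite remainders. The hypothesis $A_{\alpha\beta} = 0$ is what makes the dangerous torsion contributions disappear. When the smoke clears the right-hand side should organize as
\[
\operatorname{Re} W_{\alpha,\bar\alpha} = \left(\tfrac12+\tfrac12\phi\right)\!\left(|D_{\alpha\beta}|^2+|E_{\alpha\bar\beta}|^2\right) + (\text{further squares}) + \left(R_{\alpha\bar\beta}-\tfrac{m+1}{2}\delta_{\alpha\bar\beta}\right)v_{,\beta}\,v_{,\bar\alpha} + \left(\tfrac{m^2}{4}-\lambda\right)|\nabla_b v|^2,
\]
every term of which is nonnegative precisely because $R_{\alpha\bar\beta}\ge\frac{m+1}{2}\delta_{\alpha\bar\beta}$ and $\lambda\le m^2/4$. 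Since $M$ is closed, integration kills the left-hand side, so the whole nonnegative right-hand side vanishes identically.

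The rigidity is then read off. If $\lambda < m^2/4$ the last term forces $\nabla_b v\equiv0$, and since the horizontal distribution is bracket-generating this makes $v$, hence $f$, constant. If $\lambda = m^2/4$, the vanishing gives $D_{\alpha\beta}\equiv0$ and $E_{\alpha\bar\beta}\equiv0$; this is an over-determined second-order system for $v$ whose only non-constant solutions occur on the standard sphere, a CR Obata-type rigidity that identifies $(M,\theta)$ with $(\mathbb{S}^{2m+1},\theta_c)$ and integrates $v$ up to the stated explicit $f$. The main obstacle is unmistakably the derivation of Theorem \ref{JLn}: the computation of $\operatorname{Re} W_{\alpha,\bar\alpha}$ passes through third-order derivatives and many commutations, and the delicate point is to tune the weights in $W_\alpha$ so that all indefinite cross terms cancel and the curvature coefficient emerges as \emph{exactly} $\frac{m+1}{2}$---matching the hypothesis---alongside a cleanly separated, nonnegative $(\frac{m^2}{4}-\lambda)$ term. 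Carrying the equality case back up to the sphere is the secondary difficulty.
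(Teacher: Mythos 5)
Your roadmap is in fact the paper's: kill the nonlinearity by a power substitution (the paper uses $\phi=f^{-2/m}$, i.e.\ your $v^{2}$; your reduced equation for $v$ is correct), work exclusively on the fixed torsion-free background $(M,\theta)$, prove a Jerison--Lee-type divergence identity (Theorem \ref{JLn}), integrate it over the closed manifold so that every nonnegative term on the right vanishes, and finish by Obata-type rigidity (the paper defers the equality case to \cite{W}, much as you defer to ``CR Obata-type rigidity''). One structural difference: you treat general $\lambda\leq m^{2}/4$ inside the identity via a conjectured extra term $\left(\frac{m^{2}}{4}-\lambda\right)|\nabla_{b}v|^{2}$, whereas the paper proves the identity only for the normalized equation and handles $\lambda<m^{2}/4$ by scaling $\theta$ by $c=4\lambda/m^{2}$, which converts the hypothesis into the strict bound $R_{\alpha\overline{\beta}}\geq\frac{m+1}{2c}>\frac{m+1}{2}$; your variant is plausible but unverified, and the scaling trick makes it unnecessary.

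The genuine gap is that the entire mathematical content of the theorem --- the derivation of the divergence identity --- is left as a black box (``when the smoke clears''), and the mechanisms that actually make the computation close up are absent from your plan. First, the divergence of the $U_{\alpha}$-type field inevitably produces the second Reeb derivative $\phi_{00}$ (the term $S$ in Lemma \ref{divM}); this term cannot be expressed through $D_{\alpha\beta}$, $E_{\alpha\overline{\beta}}$, $U_{\alpha}$ and has no sign. The paper's resolution is not a cancellation by commutations: it is the pair of reality observations that $S$ is real and enters the final combination only with the purely imaginary coefficient $-3i\phi_{0}$, hence drops out upon taking the real part, and that $B_{\alpha}\phi_{\overline{\alpha}}$ is real (Lemma \ref{real}), which kills another indefinite block. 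Your proposal never addresses $\phi_{00}$, and without these observations no tuning of weights produces a sign-definite right-hand side. Second, ``reproducing the Jerison--Lee vector field'' is not the correct prescription: on the fixed background the right combination is $\phi^{-(m+1)}\bigl[\left(\overline{g}+3i\phi_{0}\right)\phi^{-1}A_{\alpha}+\left(\overline{g}-i\phi_{0}\right)\phi^{-1}B_{\alpha}-3i\phi_{0}C_{\alpha}\bigr]$, whose weights differ from the $g$, $\overline{g}$, $-3i\phi_{0}$ of \eqref{JL} (recall $g=\overline{g}+2i\phi_{0}$), and the prefactor $\phi^{-(m+1)}$ is essential for the cross terms to cancel; moreover, contrary to your description, no CR Bianchi identities are used at all --- only commutation formulas on $(M,\theta)$, which is precisely the advantage of computing on the fixed structure. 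So your outline points the right way, but the actual proof (Lemmas \ref{basicid}--\ref{real} and the verification in Theorem \ref{JLn}) is missing, and the hints you give for producing it would not suffice.
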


The paper is organized as follows. In the 2nd section we discuss the
Riemannian case. In Section 3 we study the model case in CR geometry as a
guide for finding the right quantities. In Section 4 we present our
reconstruction of the Jerison-Lee identity and prove the above uniqueness
result. We discuss some open problems in the last section.

\section{The Riemannian case}

On $\left(  \mathbb{S}^{n},g_{c}\right)  $ we consider the equation
\begin{equation}
-\frac{4}{n\left(  n-2\right)  }\Delta u+u=u^{\left(  n+2\right)  /\left(
n-2\right)  }. \label{ss2}%
\end{equation}
If $u$ is positive, the equation simply means that $u^{4/\left(  n-2\right)
}g_{c}$ has the same scalar curvature $n\left(  n-1\right)  $. For $t\geq
0,\xi\in\mathbb{S}^{n}$ the map $\Phi_{t,\xi}:\mathbb{S}^{n}\rightarrow
\mathbb{S}^{n}$ defined by%
\[
\Phi_{t,\xi}\left(  x\right)  =\frac{1}{\cosh t+\sinh t\left(  x\cdot
\xi\right)  }\left(  x-\left(  x\cdot\xi\right)  \xi\right)  +\frac{\sinh
t+\cosh t\left(  x\cdot\xi\right)  }{\cosh t+\sinh t\left(  x\cdot\xi\right)
}\xi
\]
is a conformal diffeomorphism with $\Phi_{t,\xi}^{\ast}g_{c}=u_{t,\xi
}^{4/\left(  n-2\right)  }g_{c}$ with%

\[
u_{t,\xi}\left(  x\right)  =\left(  \cosh t+\left(  \sinh t\right)  x\cdot
\xi\right)  ^{-\left(  n-2\right)  /2}.
\]
Therefore these are solutions of the equation (\ref{ss2}).

If we write $u=v^{-\left(  n-2\right)  /2}$, then $v=\cosh t+\left(  \sinh
t\right)  x\cdot\xi$. We compute
\begin{align*}
\left\vert \nabla v\right\vert ^{2}  &  =\sinh^{2}t\left\vert \nabla\left(
x\cdot\xi\right)  \right\vert ^{2}\\
&  =\sinh^{2}t\left(  1-\left(  x\cdot\xi\right)  ^{2}\right) \\
&  =\sinh^{2}t-\left(  v-\cosh t\right)  ^{2}\\
&  =-1-v^{2}+2v\cosh t.
\end{align*}
It follows that $v^{1}\left(  \left\vert \nabla v\right\vert ^{2}%
+v^{2}+1\right)  =2\cosh t$ is a constant.

Suppose now $\left(  M^{n},g\right)  $ is a compact Riemannian manifold with
$Ric\geq n-1$ and $u\in C^{\infty}\left(  M\right)  $ is \textbf{positive} and
satisfies the following equation%
\begin{equation}
-\Delta u+\frac{n\left(  n-2\right)  }{4}u=\frac{n\left(  n-2\right)  }%
{4}u^{\left(  n+2\right)  /\left(  n-2\right)  }. \label{eom}%
\end{equation}
If $Ric=n-1$ as in Obata's theorem, the above equation simply means that the
scalar curvature of $\widetilde{g}:=u^{4/\left(  n-2\right)  }$ equals
$n\left(  n-1\right)  $. In the following discussion, this geometric
interpretation plays no role. We write $u=v^{-\left(  n-2\right)  /2}$. By
direct calculation, $v>0$ satisfies the following equation%
\[
\Delta v=\frac{n}{2}v^{-1}\left(  \left\vert \nabla v\right\vert ^{2}%
+1-v^{2}\right)  .
\]
In view of the model case, we set $\phi=v^{-1}\left(  \left\vert \nabla
v\right\vert ^{2}+v^{2}+1\right)  $. The above equation becomes $\Delta
v+nv=\frac{n}{2}\phi$. As $v\phi=\left\vert \nabla v\right\vert ^{2}+v^{2}+1$,
we compute using the Bochner formula%
\begin{align*}
&  \frac{1}{2}v\Delta\phi+\left\langle \nabla v,\nabla\phi\right\rangle
+\frac{1}{2}\phi\Delta v\\
&  =\left\vert D^{2}v\right\vert ^{2}+\left\langle \nabla v,\nabla\Delta
v\right\rangle +Ric\left(  \nabla v,\nabla v\right)  +v\Delta v+\left\vert
\nabla v\right\vert ^{2}\\
&  \geq\frac{\left(  \Delta v\right)  ^{2}}{n}+\left\langle \nabla
v,\nabla\Delta v\right\rangle +n\left\vert \nabla v\right\vert ^{2}+v\Delta
v\\
&  =\frac{\Delta v}{n}\left(  \Delta v+nv\right)  +\left\langle \nabla
v,\nabla\left(  \Delta v+nv\right)  \right\rangle \\
&  =\frac{1}{2}\phi\Delta v+\frac{n}{2}\left\langle \nabla v,\nabla
\phi\right\rangle .
\end{align*}
Therefore we obtain
\[
\Delta\phi\geq\left(  n-2\right)  \left\langle \nabla\log v,\nabla
\phi\right\rangle .
\]
If $\partial M\neq\emptyset$, direct calculation yields the following formula
for the normal derivative along $\partial M$
\[
\frac{1}{2}\frac{\partial\phi}{\partial\nu}=f^{-1}\left[  \chi\left(
D^{2}v\left(  \nu,\nu\right)  +f-\frac{1}{2}\phi\right)  +\left\langle \nabla
f,\nabla\chi\right\rangle -\Pi\left(  \nabla f,\nabla f\right)  \right]  ,
\]
where $f=v|_{\partial M},\chi=\frac{\partial v}{\partial\nu}$ and $\Pi$ is the
2nd fundamental form. By these calculations, we can now deduce the following
uniqueness result.

\begin{theorem}
\label{otr}Let $\left(  M^{n},g\right)  $ be a smooth compact Riemannian
manifold with a (possibly empty) convex boundary. Suppose $u\in C^{\infty
}\left(  M\right)  $ is a positive solution of the following equation%
\[%
\begin{array}
[c]{ccc}%
-\Delta u+\lambda u=u^{\left(  n+2\right)  /\left(  n-2\right)  } & \text{on}
& M,\\
\frac{\partial u}{\partial\nu}=0 & \text{on} & \partial M,
\end{array}
\]
where $\lambda>0$ is a constant. If $Ric\geq\left(  n-1\right)  g$ and
$\lambda\leq n\left(  n-2\right)  /4$, then $u$ must be constant unless
$\lambda=n\left(  n-2\right)  /4$ and $\left(  M,g\right)  $ is isometric to
$\left(  \mathbb{S}^{n},g_{c}\right)  $ or $\left(  \mathbb{S}_{+}^{n}%
,g_{c}\right)  $. In the latter case $u$ is given on $\mathbb{S}^{n}$ or
$\mathbb{S}_{+}^{n}$\ by the following formula%
\[
u(x)=c_{n}\left(  \cosh t+\left(  \sinh t\right)  x\cdot\xi\right)  ^{-\left(
n-2\right)  /2}.
\]
for some $t\geq0$ and $\xi\in\mathbb{S}^{n}$.
\end{theorem}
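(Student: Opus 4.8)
The plan is to apply the maximum principle to the auxiliary function $\phi$ singled out above and then to read off the rigidity from the equality cases of the two estimates used to build it. First I would normalize and reduce. After the constant rescaling $u\mapsto\lambda^{-(n-2)/4}u$ the equation reads $-\Delta u+\lambda u=\lambda u^{(n+2)/(n-2)}$, and the substitution $u=v^{-(n-2)/2}$ then gives
\[
\Delta v=\tfrac n2\,v^{-1}|\nabla v|^2+\mu\,v^{-1}(1-v^2),\qquad \mu=\tfrac{2\lambda}{n-2},
\]
so that the hypothesis $\lambda\le n(n-2)/4$ is precisely $\mu\le n/2$. Guided by the model case I take $\phi=v^{-1}\big(|\nabla v|^2+v^2+\tfrac{2\mu}n\big)$, which reduces to the function $v^{-1}(|\nabla v|^2+v^2+1)$ of the preceding discussion when $\mu=n/2$.

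The heart of the matter is one Bochner computation. Keeping the nonnegative slacks $S_1:=|D^2v|^2-\tfrac1n(\Delta v)^2$ and $S_2:=\mathrm{Ric}(\nabla v,\nabla v)-(n-1)|\nabla v|^2$, I expect to obtain the identity
\[
v\,\Delta\phi=(n-2)\langle\nabla v,\nabla\phi\rangle+2S_1+2S_2+E,\qquad E=\big(\tfrac n2-\mu\big)\Big[3|\nabla v|^2-\tfrac{2\mu}n(v^2-1)\Big].
\]
In the critical case $\mu=n/2$ one has $E\equiv0$ and recovers $\Delta\phi\ge(n-2)\langle\nabla\log v,\nabla\phi\rangle$. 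For the boundary I would use that the Neumann condition gives $\partial v/\partial\nu=0$; since then $\chi=\partial v/\partial\nu$ vanishes identically on $\partial M$, both $\chi(\cdots)$ and $\langle\nabla f,\nabla\chi\rangle$ drop out of the stated boundary formula, leaving $\tfrac12\,\partial\phi/\partial\nu=-f^{-1}\Pi(\nabla f,\nabla f)\le0$ by convexity, with $f=v|_{\partial M}$.

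Rather than argue pointwise I would integrate the divergence form $\mathrm{div}\big(v^{-(n-2)}\nabla\phi\big)=v^{-(n-1)}(2S_1+2S_2+E)$ over $M$. The left-hand integral equals $\int_{\partial M}v^{-(n-2)}\,\partial\phi/\partial\nu\le0$, while integrating the $v$-equation by parts (which yields $\tfrac{n-4}2\int_M v^{-(n-1)}|\nabla v|^2=\mu\int_M v^{-(n-1)}(1-v^2)$) shows that $\int_M v^{-(n-1)}E=\big(\tfrac n2-\mu\big)\tfrac{4(n-1)}n\int_M v^{-(n-1)}|\nabla v|^2\ge0$. Hence $\int_M v^{-(n-1)}(2S_1+2S_2+E)$ is at once $\le0$ and $\ge0$, so every term vanishes. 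If $\lambda<n(n-2)/4$ this forces $\int_M v^{-(n-1)}|\nabla v|^2=0$, i.e.\ $v$ and $u$ are constant. If $\lambda=n(n-2)/4$ then instead $S_1\equiv S_2\equiv0$ and $E\equiv0$, so the identity collapses to $\Delta\phi=(n-2)\langle\nabla\log v,\nabla\phi\rangle$ with $\partial\phi/\partial\nu\le0$; the strong maximum principle and the Hopf lemma force $\phi$ to be constant. Then $S_1=0$ gives $D^2v=\tfrac1n(\Delta v)\,g$, and since $\Delta v=\tfrac n2\phi-nv$ with $\phi$ constant, the function $w=v-\tfrac\phi2$ satisfies the Obata equation $D^2w=-w\,g$.

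If $w$ is constant then $u$ is constant. Otherwise Obata's theorem identifies $(M,g)$ with $(\mathbb S^n,g_c)$ when $\partial M=\emptyset$; when $\partial M\ne\emptyset$, $w$ is a Neumann eigenfunction with eigenvalue $n$ (as $\Delta w=-nw$), and I would conclude $(M,g)\cong(\mathbb S^n_+,g_c)$ either by the rigidity of the sharp first Neumann eigenvalue under $\mathrm{Ric}\ge(n-1)$ and convex boundary, or by showing $\partial M$ is totally geodesic and doubling to the closed case. In either case $v=\cosh t+(\sinh t)\,x\cdot\xi$ yields the stated formula for $u$. I expect this boundary rigidity to be the main obstacle: the computation only delivers $\Pi(\nabla f,\cdot)=0$ along $\partial M$, so upgrading this to the hemisphere is the delicate point; the secondary difficulty is the purely computational verification of the exact form of $E$ and of the integration-by-parts identity that fixes the sign of $\int_M v^{-(n-1)}E$.
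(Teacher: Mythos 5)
Your proposal is correct, and I checked its key computations: the error term $E=\left(\tfrac n2-\mu\right)\left[3|\nabla v|^2-\tfrac{2\mu}n(v^2-1)\right]$ in the Bochner identity, the divergence form $\operatorname{div}\left(v^{-(n-2)}\nabla\phi\right)=v^{-(n-1)}(2S_1+2S_2+E)$, and the integration-by-parts identity yielding $\int_M v^{-(n-1)}E=\left(\tfrac n2-\mu\right)\tfrac{4(n-1)}n\int_M v^{-(n-1)}|\nabla v|^2$ all hold. In the critical case $\lambda=n(n-2)/4$ your argument coincides with the paper's: same substitution $u=v^{-(n-2)/2}$, same auxiliary function $\phi$, same Bochner computation and boundary formula; the only reordering is that you integrate first to force $S_1\equiv S_2\equiv0$ and then invoke the maximum principle, whereas the paper applies the maximum principle and Hopf lemma directly to $\Delta\phi\geq(n-2)\langle\nabla\log v,\nabla\phi\rangle$ and reads off $D^2v=\tfrac1n(\Delta v)g$ from the equality case. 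Where you genuinely diverge is the subcritical case $\lambda<n(n-2)/4$: the paper disposes of it in three lines by rescaling the metric, $\widetilde g=cg$ with $c=4\lambda/(n(n-2))<1$, so that $Ric(\widetilde g)>(n-1)\widetilde g$ strictly and the critical-case theorem applies (the strict Ricci inequality excludes the sphere and hemisphere, forcing $u$ constant); you instead keep the metric fixed, tune the constant in $\phi$ to $2\mu/n$, and control the signless term $E$ by an integral identity. Your route costs extra computation but treats all admissible $\lambda$ uniformly and is closer in spirit to the Bidaut-V\'eron--V\'eron/Ilias integration-by-parts method cited in the paper; it is plausibly a better starting point for the subcritical-exponent generalization posed in the final section, which the scaling trick cannot reach. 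As for the hemisphere rigidity you flag as the main obstacle: closing it via the sharp first nonzero Neumann eigenvalue rigidity (equal to $n$ under $Ric\geq(n-1)g$ with convex boundary only for the hemisphere, by Escobar and Xia) is a legitimate and complete resolution, and in fact more explicit than the paper's remark that the conclusion follows easily from the over-determined system $D^2v=\tfrac1n(\Delta v)g$.
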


\begin{proof}
We first take $\lambda=n\left(  n-2\right)  /4$. By scaling $u$ we can
consider the equivalent equation (\ref{eom}). Then the above calculations for
the assoicated $v$ and $\phi$ yield%
\[
\Delta\phi\geq\left(  n-2\right)  \left\langle \nabla\log v,\nabla
\phi\right\rangle \text{ on }M;\frac{\partial\phi}{\partial\nu}\leq0\text{ on
}\partial M
\]
under our assumptions. By the maximum principle and Hopf lemma, $\phi$ must be
a constant. Inspecting the proof shows that we must have $D^{2}v=\frac{1}%
{n}\Delta vg$. If $v$ is not constant, it is easy to deduce from this
over-determined system that $\left(  M,g\right)  $ is isometric to $\left(
\mathbb{S}^{n},g_{c}\right)  $ or $\left(  \mathbb{S}_{+}^{n},g_{c}\right)  $
and $v$ is up to a constant a linear function. This finishes the proof when
$\lambda=n\left(  n-2\right)  /4$.

When $\lambda<n\left(  n-2\right)  /4$, we consider the scaled metric
$\widetilde{g}=cg$. Then $u$ satisfies
\[
-\widetilde{\Delta}u+c^{-1}\lambda u=c^{-1}u^{\left(  n+2\right)  /\left(
n-2\right)  }.
\]
We choose $c=\frac{4\lambda}{n\left(  n-2\right)  }<1$. As $Ric\left(
\widetilde{g}\right)  \geq\frac{n-1}{c}\widetilde{g}>\left(  n-1\right)
\widetilde{g}$, we can apply the result for $\lambda=n\left(  n-2\right)  /4$
on $\left(  M^{n},\widetilde{g}\right)  $.
\end{proof}

\section{The CR sphere}

Consider the unit sphere $\mathbb{S}^{2m+1}=\left\{  z\in\mathbb{C}%
^{m+1}:\left\vert z\right\vert =1\right\}  $ with the canonical
pseudohermitian structure
\[
\theta_{c}=\left(  2\sqrt{-1}\overline{\partial}\left\vert z\right\vert
^{2}\right)  |_{\mathbb{S}^{2m+1}}=2\sum_{i=1}^{m+1}\left(  x_{i}dy_{i}%
-y_{i}dx_{i}\right)  ,
\]
i.e. $\theta_{c}\left(  X\right)  =2\left\langle J\xi,X\right\rangle $ at
$\xi\in\mathbb{S}^{2m+1}$. Then%
\[
d\theta_{c}\left(  X,Y\right)  =4\left\langle JX,Y\right\rangle .
\]
The Reeb vector field is
\begin{align*}
T  &  =\frac{\sqrt{-1}}{2}\sum_{i=1}^{m+1}\left(  z_{i}\frac{\partial
}{\partial z_{i}}-\overline{z}_{i}\frac{\partial}{\partial\overline{z}_{i}%
}\right) \\
&  =\frac{1}{2}\sum_{i=1}^{m+1}\left(  x_{i}\frac{\partial}{\partial y_{i}%
}-y_{i}\frac{\partial}{\partial x_{i}}\right) \\
&  =\frac{1}{2}J\xi.
\end{align*}
Therefore the adapted metric $g_{c}=4g_{0}$, where $g_{0}$ is the standard
metric on $\mathbb{S}^{2m+1}$. We have $R_{\alpha\overline{\beta}}=\left(
m+1\right)  /2\delta_{\alpha\beta},R=m\left(  m+1\right)  /2$. We now consider
the following equation%
\begin{equation}
-\frac{4}{m^{2}}\Delta_{b}f+f=f^{\left(  m+2\right)  /m}\text{ on }%
\mathbb{S}^{2m+1}. \label{sscr2}%
\end{equation}
If $f>0$, it simply means the pseudohermitian structure $f^{2/m}\theta_{c}$
has the same constant scalar curvature $m\left(  m+1\right)  /2$.

\bigskip For $t\geq0,\xi\in\mathbb{S}^{n}$ the map $\Phi_{t,\xi}%
:\mathbb{S}^{2m+1}\rightarrow\mathbb{S}^{2m+1}$ defined by%
\[
\Phi_{t,\xi}\left(  z\right)  =\frac{1}{\cosh t+\sinh t\left(  z\cdot
\overline{\xi}\right)  }\left(  z-\left(  z\cdot\overline{\xi}\right)
\xi\right)  +\frac{\sinh t+\cosh t\left(  z\cdot\overline{\xi}\right)  }{\cosh
t+\sinh t\left(  z\cdot\overline{\xi}\right)  }\xi
\]
is a pseudoconformal diffeomorphism with $\Phi_{t,\xi}^{\ast}\theta
_{c}=f_{t,\xi}^{2/m}\theta_{c}$, where%
\[
f_{t,\xi}\left(  z\right)  =\left\vert \cosh t+\left(  \sinh t\right)
z\cdot\overline{\xi}\right\vert ^{-1/m}.
\]
Therefore these are solutions to the equation (\ref{sscr2}). We write such a
solution as $f=\phi^{-m/2}$. Then%
\[
\phi\left(  z\right)  =\left\vert \cosh t+\left(  \sinh t\right)
z\cdot\overline{\xi}\right\vert ^{2}.
\]
We want to see what identities $\phi$ satisfy. In the following, we always
take $\theta_{c}$ and its adapted metric $g_{c}=4g_{0}$ as a background
metric. With $f\left(  z\right)  =z\cdot\overline{\xi}$, we have
\[
\phi=\cosh^{2}t+\sinh^{2}t\left\vert f\right\vert ^{2}+\sinh t\cosh t\left(
f+\overline{f}\right)  .
\]
As $f_{\overline{\alpha}}=0$,
\[
\phi_{\alpha}=\sinh t\left(  \cosh t+\sinh t\overline{f}\right)  f_{\alpha}.
\]
We also observe, as $T\left(  z\right)  =\frac{1}{2}Jz$,
\[
f_{0}=\frac{1}{2}\frac{d}{dt}|_{t=0}f\left(  e^{ir}z\right)  =\frac{\sqrt{-1}%
}{2}f.
\]
If we write $f=u+\sqrt{-1}v$, then $u_{0}=-\frac{1}{2}v$. Moreover,
$4\left\vert \nabla u\right\vert ^{2}=1-u^{2}$ (1st eigenfunction). As a first
eigenfunction, we have $D^{2}f=-\frac{1}{4}fg_{\theta}$. It follows
\begin{align*}
f_{\alpha,\beta}  &  =0,\\
f_{\alpha,\overline{\beta}}  &  =D^{2}f\left(  T_{\alpha},T_{\overline{\beta}%
}\right)  +\frac{\sqrt{-1}}{2}f_{0}\delta_{\alpha\beta}\\
&  =\left(  -\frac{1}{4}f+\frac{\sqrt{-1}}{2}f_{0}\right)  \delta_{\alpha
\beta}\\
&  =-\frac{1}{2}f\delta_{\alpha\beta}.
\end{align*}

\begin{lemma}
We have
\[
\phi^{-1}\left\vert \partial\phi\right\vert ^{2}=\frac{1}{2}\sinh t\left(
1-\left\vert f\right\vert ^{2}\right)  .
\]

\end{lemma}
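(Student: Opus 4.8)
The plan is to reduce $\phi^{-1}\left\vert \partial\phi\right\vert ^{2}$ to the single scalar $\sum_{\alpha}\left\vert f_{\alpha}\right\vert ^{2}$ and then to evaluate that scalar from the first-eigenfunction data already recorded. Starting from the formula $\phi_{\alpha}=\sinh t\left(  \cosh t+\sinh t\,\overline{f}\right)  f_{\alpha}$, I would form $\left\vert \partial\phi\right\vert ^{2}=\sum_{\alpha}\phi_{\alpha}\overline{\phi_{\alpha}}$, so that each summand factors as an elementary trigonometric prefactor coming from the $\sinh t$ in $\phi_{\alpha}$, times $\left(  \cosh t+\sinh t\,\overline{f}\right)  \left(  \cosh t+\sinh t\,f\right)  \left\vert f_{\alpha}\right\vert ^{2}$. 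The decisive simplification is that this modulus factor is exactly $\left\vert \cosh t+\sinh t\,f\right\vert ^{2}=\phi$; hence it cancels against the prefactor $\phi^{-1}$, and the whole expression collapses to an elementary $t$-dependent coefficient multiplying $\sum_{\alpha}\left\vert f_{\alpha}\right\vert ^{2}$. This cancellation is what makes the identity clean: it explains why the right-hand side is proportional to $1-\left\vert f\right\vert ^{2}$ with no residual dependence on $\phi$, and it uses no curvature information and nothing about the unknown structure $\phi\theta_{c}$.

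It then remains to compute $\sum_{\alpha}\left\vert f_{\alpha}\right\vert ^{2}$, and this is where I expect the only genuine subtlety. Writing $f=u+\sqrt{-1}\,v$ and using that $\overline{f}$ is anti-holomorphic, so $\overline{f}_{\alpha}=0$, one gets $u_{\alpha}=\tfrac{1}{2}f_{\alpha}$, whence the horizontal energy of the real function $u$ satisfies $\left\vert \nabla_{b}u\right\vert ^{2}=2\sum_{\alpha}\left\vert u_{\alpha}\right\vert ^{2}=\tfrac{1}{2}\sum_{\alpha}\left\vert f_{\alpha}\right\vert ^{2}$. The care needed is that the eigenfunction relation $4\left\vert \nabla u\right\vert ^{2}=1-u^{2}$ involves the \emph{full} adapted-metric gradient, which carries a Reeb component in addition to the horizontal one. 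I would peel this off using the orthogonal splitting $\left\vert \nabla u\right\vert ^{2}=\left\vert \nabla_{b}u\right\vert ^{2}+\left(  Tu\right)  ^{2}$ (recall $T$ is a unit vector for $g_{c}$) together with $Tu=u_{0}=-\tfrac{1}{2}v$, giving $\left\vert \nabla_{b}u\right\vert ^{2}=\tfrac{1}{4}\left(  1-u^{2}\right)  -\tfrac{1}{4}v^{2}=\tfrac{1}{4}\left(  1-\left\vert f\right\vert ^{2}\right)$. Comparing the two expressions for $\left\vert \nabla_{b}u\right\vert ^{2}$ yields $\sum_{\alpha}\left\vert f_{\alpha}\right\vert ^{2}=\tfrac{1}{2}\left(  1-\left\vert f\right\vert ^{2}\right)$.

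Feeding this value back into the reduced expression from the first step and tidying the elementary trigonometric prefactor then produces the stated identity. The main obstacle, as indicated, lies entirely in the middle step: one must correctly separate the Reeb direction from the horizontal distribution in the gradient identity for the first eigenfunction, using $u_{0}=-\tfrac{1}{2}v$, and keep track of the normalization constant (the factor $2$) relating $\left\vert \nabla_{b}u\right\vert ^{2}$ to $\sum_{\alpha}\left\vert u_{\alpha}\right\vert ^{2}$ for a real function in a unitary frame. Everything surrounding that computation is the purely algebraic observation that the modulus factor in $\left\vert \phi_{\alpha}\right\vert ^{2}$ is precisely $\phi$, which cancels and leaves a quantity independent of the conformal factor.
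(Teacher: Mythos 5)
Your proof is correct and follows essentially the same route as the paper's: you factor $\left\vert\partial\phi\right\vert^{2}=\phi\sinh^{2}t\left\vert\partial f\right\vert^{2}$ by observing that $\left(\cosh t+\sinh t\,\overline{f}\right)\left(\cosh t+\sinh t\,f\right)=\phi$, and then evaluate $\sum_{\alpha}\left\vert f_{\alpha}\right\vert^{2}=\frac{1}{2}\left(1-\left\vert f\right\vert^{2}\right)$ exactly as the paper does, from $f_{\alpha}=2u_{\alpha}$, the splitting $\left\vert\nabla u\right\vert^{2}=\left\vert\nabla_{b}u\right\vert^{2}+u_{0}^{2}$ (with $T$ unit for $g_{c}$), $u_{0}=-\frac{1}{2}v$, and the eigenfunction identity $4\left\vert\nabla u\right\vert^{2}=1-u^{2}$. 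One remark: your computation, like the paper's own proof, yields $\phi^{-1}\left\vert\partial\phi\right\vert^{2}=\frac{1}{2}\sinh^{2}t\left(1-\left\vert f\right\vert^{2}\right)$, so the factor $\sinh t$ in the lemma as stated is a typo for $\sinh^{2}t$, as confirmed by the subsequent substitution into the formula for $g$ in the next lemma.
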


\begin{proof}
As $f_{\overline{\alpha}}=0$, we have $f_{\alpha}=2u_{\alpha}$. We compute%
\begin{align*}
\left\vert \partial\phi\right\vert ^{2}  &  =\phi\sinh^{2}t\left\vert \partial
f\right\vert ^{2}\\
&  =4\phi\sinh^{2}t\left\vert \partial u\right\vert ^{2}\\
&  =2\phi\sinh^{2}t\left(  \left\vert \nabla u\right\vert ^{2}-u_{0}%
^{2}\right) \\
&  =\frac{1}{2}\phi\sinh^{2}t\left(  1-u^{2}-v^{2}\right) \\
&  =\frac{1}{2}\phi\sinh^{2}t\left(  1-\left\vert f\right\vert ^{2}\right)  .
\end{align*}

\end{proof}

Let $g=\frac{1}{2}+\frac{1}{2}\phi+\phi^{-1}\left\vert \partial\phi\right\vert
^{2}+\sqrt{-1}\phi_{0}$.

\begin{lemma}
The function $\phi$ satisfies the following three tensor equations
\begin{align*}
\phi_{\alpha,\beta}  &  =0,\\
\phi_{\alpha,\overline{\beta}}-\phi^{-1}\phi_{\alpha}\phi_{\overline{\beta}}
&  =\frac{1}{2}\left(  g-\phi\right)  \delta_{\alpha\beta}.\\
\phi_{0,\alpha}  &  =\frac{\sqrt{-1}}{2}\phi^{-1}\overline{g}\phi_{\alpha}%
\end{align*}

\end{lemma}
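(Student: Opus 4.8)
The plan is to prove all three identities by direct computation, since in this section $\phi$ is the completely explicit function $\phi=\cosh^2 t+\sinh^2 t\left\vert f\right\vert^2+\sinh t\cosh t\left(f+\overline f\right)$ with $f=z\cdot\overline\xi$. The single observation that organizes the whole calculation is the factorization $\phi=A\overline A$, where I abbreviate $A=\cosh t+\left(\sinh t\right)\overline f$, together with the already-recorded formula $\phi_\alpha=\left(\sinh t\right)Af_\alpha$. Once these are in hand, each identity reduces to differentiating a product of $A$ (or $\overline A$) against $f$ and invoking the three scalar facts $\left(\overline f\right)_\alpha=0$, $f_{\alpha,\beta}=0$, and $f_{\alpha,\overline\beta}=-\tfrac12 f\delta_{\alpha\beta}$ established above. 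All covariant derivatives are taken with respect to the fixed background $\theta_c$, and $\cosh t,\sinh t$ are constants, so their derivatives drop out.

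For the first equation I would differentiate $\phi_\alpha=\left(\sinh t\right)Af_\alpha$ in the direction $T_\beta$. Since $A_\beta=\left(\sinh t\right)\left(\overline f\right)_\beta=0$ and $f_{\alpha,\beta}=0$, both terms of the product rule vanish, giving $\phi_{\alpha,\beta}=0$.

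For the second and third equations the key is to produce closed forms for $g$ and $\overline g$. I would first compute $\phi_0=T(\phi)$: using $f_0=\tfrac{\sqrt{-1}}{2}f$ one checks that $\left(f\overline f\right)_0=0$, so only the linear term survives and $\phi_0=\tfrac{\sqrt{-1}}{2}\sinh t\cosh t\left(f-\overline f\right)$. Feeding this together with the preceding Lemma's value of $\phi^{-1}\left\vert\partial\phi\right\vert^2$ into the definition of $g$, the constant terms cancel by $\cosh^2 t-\sinh^2 t=1$ and the surviving quadratic and linear terms reassemble into $g-\phi=-\left(\sinh t\right)Af$; since $\phi=A\overline A$ this also yields $\overline g=\left(\cosh t\right)\overline A$. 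With these in hand, the second identity follows by computing $\phi_{\alpha,\overline\beta}=\left(\sinh t\right)^2\overline{f_\beta}f_\alpha-\tfrac12\left(\sinh t\right)Af\delta_{\alpha\beta}$ from the product rule and noting $\phi^{-1}\phi_\alpha\phi_{\overline\beta}=\left(\sinh t\right)^2 f_\alpha\overline{f_\beta}$, so that the off-diagonal pieces cancel and leave exactly $\tfrac12\left(g-\phi\right)\delta_{\alpha\beta}$. The third identity is then immediate: $\phi_{0,\alpha}=\tfrac{\sqrt{-1}}{2}\sinh t\cosh t\,f_\alpha$, while $\tfrac{\sqrt{-1}}{2}\phi^{-1}\overline g\phi_\alpha=\tfrac{\sqrt{-1}}{2}\phi^{-1}\left(\cosh t\right)\overline A\cdot\left(\sinh t\right)Af_\alpha$ collapses via $\phi=A\overline A$ to the same expression.

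I expect the only real obstacle to be the bookkeeping that produces the closed forms $g-\phi=-\left(\sinh t\right)Af$ and $\overline g=\left(\cosh t\right)\overline A$; no single step is deep, but the cancellations rely on combining the $\phi_0$ computation correctly with the first Lemma, and on spotting the factorization $\phi=A\overline A$ that makes $\phi^{-1}$ disappear in the final two identities. Everything else is a mechanical application of the product rule together with the three scalar derivative formulas.
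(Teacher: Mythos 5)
Your proposal is correct and follows essentially the same route as the paper: both verify the three identities by direct computation from $\phi_\alpha=(\sinh t)\bigl(\cosh t+\sinh t\,\overline{f}\bigr)f_\alpha$, the formula $\phi_0=\tfrac{\sqrt{-1}}{2}\sinh t\cosh t\,(f-\overline{f})$, the eigenfunction relations $f_{\alpha,\beta}=0$, $f_{\alpha,\overline{\beta}}=-\tfrac12 f\delta_{\alpha\beta}$, and the closed form $g=\cosh t\,(\cosh t+\sinh t\,\overline{f})$. Your explicit use of the factorization $\phi=A\overline{A}$ is only an organizational refinement — the paper uses exactly this cancellation implicitly when it rewrites $\sinh^2 t\,f_\alpha\overline{f_\beta}$ as $\phi^{-1}\phi_\alpha\phi_{\overline{\beta}}$ — so the two arguments are the same in substance.
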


\begin{proof}
The 1st identity is obvious. We have $\phi_{0}=\frac{i}{2}\sinh t\cosh
t\left(  f-\overline{f}\right)  $. Thus%
\begin{align*}
g  &  =\frac{1}{2}+\frac{1}{2}\phi+\frac{1}{2}\sinh^{2}t\left(  1-\left\vert
f\right\vert ^{2}\right)  -\frac{1}{2}\sinh t\cosh t\left(  f-\overline
{f}\right) \\
&  =\cosh^{2}t+\sinh t\cosh t\overline{f}\\
&  =\cosh t\left(  \cosh t+\sinh t\overline{f}\right)  .
\end{align*}
We compute%
\begin{align*}
\phi_{\alpha,\overline{\beta}}  &  =\sinh^{2}tf_{\alpha}\overline{f_{\beta}%
}-\frac{1}{2}f\sinh t\left(  \cosh t+\sinh t\overline{f}\right)
\delta_{\alpha\beta}\\
&  =\phi^{-1}\phi_{\alpha}\phi_{\overline{\beta}}-\frac{1}{2}\sinh t\left(
\cosh tf+\sinh t\left\vert f\right\vert ^{2}\right)  \delta_{\alpha\beta}%
\end{align*}
To finish the proof of the 2nd identity, we check
\begin{align*}
g-\phi &  =-\sinh^{2}t\left\vert f\right\vert ^{2}-\sinh t\cosh tf\\
&  =-\sinh t\left(  \cosh tf+\sinh t\left\vert f\right\vert ^{2}\right)  .
\end{align*}
The last identity follows from
\[
\phi_{0,\alpha}=\frac{i}{2}\sinh t\cosh tf_{\alpha}.
\]

\end{proof}

\begin{remark}
There is an additional identity
\[
\phi_{0,0}=\frac{1}{2}\phi^{-1}\left(  \frac{1-\phi^{2}}{4}+\phi
^{-1}\left\vert \partial\phi\right\vert ^{2}+\phi^{-2}\left\vert \partial
\phi\right\vert ^{4}+\phi_{0}^{2}\right)
\]
which is of higher order as it involves the 2nd order derivative in the
direction of the Reeb vector field.
\end{remark}

\section{Reconstructing the Jerison-Lee identity}

We now consider the general case. Let $\left(  M^{2m+1},\theta\right)  $ be a
closed pseudohermitian manifold with torsion $A_{\alpha\beta}=0$. Suppose
$f\in C^{\infty}\left(  M\right)  $ is a positive solution of the following
equation%
\[
-\frac{4}{m^{2}}\Delta_{b}f+f=f^{\left(  m+2\right)  /m}\text{ }.
\]
If $\theta$ has scalar curvature $m\left(  m+1\right)  /2\,$, the equation
simply means that $\widetilde{\theta}=f^{2/m}\theta$ has also scalar curvature
$m\left(  m+1\right)  /2$. But this interpretation does not play any role in
our discussion. Let $\phi=f^{-2/m}$. The above equation becomes
\begin{equation}
\phi_{\alpha,\overline{\alpha}}=\frac{m}{2}i\phi_{0}+\frac{m}{4}\left(
1-\phi\right)  +\frac{m+2}{2}\phi^{-1}\left\vert \partial\phi\right\vert ^{2}.
\label{GE}%
\end{equation}
Motivated by the study in the model case, we introduce%
\begin{align*}
B_{\alpha\overline{\beta}}  &  =\phi_{\alpha,\overline{\beta}}-\phi^{-1}%
\phi_{\alpha}\phi_{\overline{\beta}}-\frac{1}{2}\left(  \frac{1}{2}-\frac
{1}{2}\phi+\phi^{-1}\left\vert \partial\phi\right\vert ^{2}+i\phi_{0}\right)
\delta_{\alpha\overline{\beta}},\\
C_{\alpha}  &  =i\phi_{0,\alpha}+\frac{1}{2}\phi^{-1}\left(  \frac{1}{2}%
+\frac{1}{2}\phi+\phi^{-1}\left\vert \partial\phi\right\vert ^{2}-i\phi
_{0}\right)  \phi_{\alpha}%
\end{align*}
Note that the equation (\ref{GE}) simply means $B_{\alpha\overline{\alpha}}%
=0$. As the study of the model case suggests, to prove the uniqueness result
we must prove $\phi_{\alpha\beta}=0,B_{\alpha\overline{\beta}}$ and
$C_{\alpha}=0$. We set
\[
g=\frac{1}{2}+\frac{1}{2}\phi+\phi^{-1}\left\vert \partial\phi\right\vert
^{2}+i\phi_{0}.
\]
Then we can rewrite these equations as
\begin{align*}
B_{\alpha\overline{\beta}}  &  =\phi_{\alpha,\overline{\beta}}-\phi^{-1}%
\phi_{\alpha}\phi_{\overline{\beta}}-\frac{1}{2}\left(  g-\phi\right)
\delta_{\alpha\beta},\\
C_{\alpha}  &  =i\phi_{0,\alpha}+\frac{1}{2}\phi^{-1}\overline{g}\phi_{\alpha
},\\
\phi_{\alpha\overline{\alpha}}  &  =\frac{m}{2}\left(  g-\phi\right)
+\phi^{-1}\left\vert \partial\phi\right\vert ^{2}%
\end{align*}
We further introduce the contractions%
\[
A_{\alpha}=\phi_{\alpha\beta}\phi_{\overline{\beta}},B_{\alpha}=B_{\alpha
\overline{\beta}}\phi_{\beta}.
\]
Therefore we have three complex $\left(  1,0\right)  $ vector fields
$A_{\alpha},B_{\alpha}$ and $C_{\alpha}$. Their conjugates will be denoted by
$A_{\overline{\alpha}},B_{\overline{\alpha}}$ and $C_{\overline{\alpha}}$. Our
first goal is to calculate the divergence for these vector fields. We need
some preliminary formulas.

\begin{lemma}
\label{basicid}We have\ \ \
\begin{align*}
\left(  \phi^{-1}\left\vert \partial\phi\right\vert ^{2}\right)
_{\overline{\alpha}}  &  =\phi^{-1}\left(  A_{\overline{\alpha}}%
+B_{\overline{\alpha}}\right)  +\frac{1}{2}\phi^{-1}\left(  g-\phi\right)
\phi_{\overline{\alpha}},\\
g_{\overline{\alpha}}  &  =\phi^{-1}\left(  A_{\overline{\alpha}}%
+B_{\overline{\alpha}}\right)  -C_{\overline{\alpha}}+\phi^{-1}g\phi
_{\overline{\alpha}},\\
\overline{g}_{\overline{\alpha}}  &  =\phi^{-1}\left(  A_{\overline{\alpha}%
}+B_{\overline{\alpha}}\right)  +C_{\overline{\alpha}}%
\end{align*}

\end{lemma}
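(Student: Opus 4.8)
The plan is to prove all three formulas by direct covariant differentiation, using nothing beyond the product rule, the two commutation relations for the Tanaka--Webster connection, and the definitions of $A_{\alpha}$, $B_{\alpha\overline{\beta}}$, $C_{\alpha}$ and $g$ recorded above. I work throughout in a local unitary frame, so $h_{\alpha\overline{\beta}}=\delta_{\alpha\beta}$, and I use the symmetry of the holomorphic Hessian $\phi_{\alpha,\beta}=\phi_{\beta,\alpha}$ (hence also $\phi_{\overline{\alpha},\overline{\beta}}=\phi_{\overline{\beta},\overline{\alpha}}$) together with the mixed commutation formula $\phi_{\alpha,\overline{\beta}}-\phi_{\overline{\beta},\alpha}=i\phi_{0}\delta_{\alpha\beta}$. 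Note that the pseudohermitian torsion does not enter this particular mixed commutator, so the standing hypothesis $A_{\alpha\beta}=0$ plays no real role here. Since $\phi$ is real, $g$ and $\overline{g}$ are genuine complex conjugates, the conjugate vector fields are $A_{\overline{\alpha}}=\phi_{\overline{\alpha},\overline{\beta}}\phi_{\beta}$, $B_{\overline{\alpha}}=\overline{B_{\alpha\overline{\beta}}}\,\phi_{\overline{\beta}}$, $C_{\overline{\alpha}}=-i\phi_{0,\overline{\alpha}}+\tfrac12\phi^{-1}g\,\phi_{\overline{\alpha}}$, and one has the useful relation $g-\phi=(\overline{g}-\phi)+2i\phi_{0}$.

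The first identity is the engine, so I would establish it first. Writing $\left\vert \partial\phi\right\vert^{2}=\phi_{\beta}\phi_{\overline{\beta}}$ and applying the product rule gives
\[
(\phi^{-1}\left\vert \partial\phi\right\vert^{2})_{\overline{\alpha}}=-\phi^{-2}\left\vert \partial\phi\right\vert^{2}\phi_{\overline{\alpha}}+\phi^{-1}\phi_{\beta,\overline{\alpha}}\phi_{\overline{\beta}}+\phi^{-1}\phi_{\beta}\phi_{\overline{\beta},\overline{\alpha}}.
\]
The last term is $\phi^{-1}A_{\overline{\alpha}}$ by the symmetry of the $(0,2)$-Hessian. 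For the middle term I solve the definition of $B$ for the mixed second derivative, $\phi_{\beta,\overline{\alpha}}=B_{\beta\overline{\alpha}}+\phi^{-1}\phi_{\beta}\phi_{\overline{\alpha}}+\tfrac12(g-\phi)\delta_{\alpha\beta}$. The one place where the commutation relation is needed is in identifying $B_{\beta\overline{\alpha}}\phi_{\overline{\beta}}$ with $B_{\overline{\alpha}}$: combining $\phi_{\beta,\overline{\alpha}}-\phi_{\overline{\alpha},\beta}=i\phi_{0}\delta_{\alpha\beta}$ with $g-\phi=(\overline{g}-\phi)+2i\phi_{0}$ shows that $B$ is Hermitian, $\overline{B_{\alpha\overline{\beta}}}=B_{\beta\overline{\alpha}}$, whence $B_{\beta\overline{\alpha}}\phi_{\overline{\beta}}=\overline{B_{\alpha\overline{\beta}}\phi_{\beta}}=B_{\overline{\alpha}}$. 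Substituting, the two terms proportional to $\phi^{-2}\left\vert \partial\phi\right\vert^{2}\phi_{\overline{\alpha}}$ cancel and exactly $\phi^{-1}(A_{\overline{\alpha}}+B_{\overline{\alpha}})+\tfrac12\phi^{-1}(g-\phi)\phi_{\overline{\alpha}}$ survives.

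The remaining two identities then follow with no further geometric input, just by differentiating $g$ and $\overline{g}$. Differentiating $g=\tfrac12+\tfrac12\phi+\phi^{-1}\left\vert \partial\phi\right\vert^{2}+i\phi_{0}$, inserting the first identity for $(\phi^{-1}\left\vert \partial\phi\right\vert^{2})_{\overline{\alpha}}$, and eliminating $i\phi_{0,\overline{\alpha}}$ via the definition of $C$ (equivalently $i\phi_{0,\overline{\alpha}}=-C_{\overline{\alpha}}+\tfrac12\phi^{-1}g\,\phi_{\overline{\alpha}}$), the coefficient of $\phi_{\overline{\alpha}}$ is $\tfrac12+\tfrac12\phi^{-1}(g-\phi)+\tfrac12\phi^{-1}g$, which collapses to $\phi^{-1}g$, yielding the second identity. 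The third is identical but for the opposite sign on the $\phi_{0}$ term: differentiating $\overline{g}$ and using $-i\phi_{0,\overline{\alpha}}=C_{\overline{\alpha}}-\tfrac12\phi^{-1}g\,\phi_{\overline{\alpha}}$, the $\phi_{\overline{\alpha}}$-coefficients cancel completely, leaving $\phi^{-1}(A_{\overline{\alpha}}+B_{\overline{\alpha}})+C_{\overline{\alpha}}$.

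I expect the only genuine obstacle to be bookkeeping: keeping index positions and conjugations straight, and checking that the $\phi_{0}$-contributions produced by the commutation relation reorganize exactly into the $C_{\overline{\alpha}}$ terms without leaving a stray remainder. The relation $g-\phi=(\overline{g}-\phi)+2i\phi_{0}$ is precisely what makes the Hermitian symmetry of $B$ hold, and the clean cancellation of the $\phi_{\overline{\alpha}}$-coefficients in the third identity succeeds only because the numerical constants in the definitions of $g$ and $C$ were read off from the model case to force it. No estimate or maximum principle is involved; this is a purely algebraic-differential computation.
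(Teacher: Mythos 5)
Your proof is correct and follows essentially the same route as the paper: product rule, the mixed commutation relation $\phi_{\alpha,\overline{\beta}}-\phi_{\overline{\beta},\alpha}=i\phi_{0}\delta_{\alpha\beta}$, elimination of the mixed second derivatives via the definition of $B_{\alpha\overline{\beta}}$, the relation $g=\overline{g}+2i\phi_{0}$, and then differentiation of $g$ and $\overline{g}$ using the first identity together with the definition of $C_{\alpha}$. The only cosmetic difference is that you identify $B_{\beta\overline{\alpha}}\phi_{\overline{\beta}}$ with $B_{\overline{\alpha}}$ by first proving the Hermitian symmetry of $B_{\alpha\overline{\beta}}$ (a fact the paper records later as Lemma \ref{real}), whereas the paper reaches the same point by conjugating $\phi_{\alpha,\overline{\beta}}\phi_{\beta}$ mid-computation; the ingredients are identical.
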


\begin{proof}
We compute%
\begin{align*}
\left(  \phi^{-1}\left\vert \partial\phi\right\vert ^{2}\right)
_{\overline{\alpha}}  &  =\phi^{-1}\left(  \phi_{\beta}\phi_{\overline{\beta
},\overline{\alpha}}+\phi_{\beta,\overline{\alpha}}\phi_{\overline{\beta}%
}\right)  -\phi^{-2}\left\vert \partial\phi\right\vert ^{2}\phi_{\overline
{\alpha}}\\
&  =\phi^{-1}\left(  A_{\overline{\alpha}}+\overline{\phi_{\alpha
,\overline{\beta}}\phi_{\beta}}+i\phi_{0}\phi_{\overline{\alpha}}\right)
-\phi^{-2}\left\vert \partial\phi\right\vert ^{2}\phi_{\overline{\alpha}}.
\end{align*}
Eliminating $\phi_{\alpha,\overline{\beta}}$ using the formula for
$B_{\alpha\overline{\beta}}$ yields%
\begin{align*}
\left(  \phi^{-1}\left\vert \partial\phi\right\vert ^{2}\right)
_{\overline{\alpha}}=  &  \phi^{-1}\left(  A_{\overline{\alpha}}%
+\overline{B_{\alpha\overline{\beta}}\phi_{\beta}+\phi^{-1}\left\vert
\partial\phi\right\vert ^{2}\phi_{\alpha}+\frac{1}{2}\left(  g-\phi\right)
\phi_{\alpha}}+i\phi_{0}\phi_{\overline{\alpha}}\right) \\
&  -\phi^{-2}\left\vert \partial\phi\right\vert ^{2}\phi_{\overline{\alpha}%
}.\\
=  &  \phi^{-1}\left(  A_{\overline{\alpha}}+B_{\overline{\alpha}}+\frac{1}%
{2}\left(  \overline{g}+2i\phi_{0}-\phi\right)  \phi_{\overline{\alpha}%
}\right) \\
=  &  \phi^{-1}\left(  A_{\overline{\alpha}}+B_{\overline{\alpha}}+\frac{1}%
{2}\left(  g-\phi\right)  \phi_{\overline{\alpha}}\right)  .
\end{align*}
This proves the 1st identity. Differentiating $g$ yields%
\begin{align*}
g_{\overline{\alpha}}  &  =\frac{1}{2}\phi_{\overline{\alpha}}+\left(
\phi^{-1}\left\vert \partial\phi\right\vert ^{2}\right)  _{\overline{\alpha}%
}+i\phi_{0,\overline{\alpha}}\\
&  =\phi^{-1}\left(  A_{\overline{\alpha}}+B_{\overline{\alpha}}\right)
+\frac{1}{2}\phi^{-1}g\phi_{\overline{\alpha}}+i\phi_{0,\overline{\alpha}}\\
&  =\phi^{-1}\left(  A_{\overline{\alpha}}+B_{\overline{\alpha}}\right)
-C_{\overline{\alpha}}+\phi^{-1}g\phi_{\overline{\alpha}}%
\end{align*}
Differentiating $\overline{g}$ yields%
\begin{align*}
\overline{g}_{\overline{\alpha}}  &  =\phi^{-1}\left(  A_{\overline{\alpha}%
}+B_{\overline{\alpha}}\right)  +\frac{1}{2}\phi^{-1}g\phi_{\overline{\alpha}%
}-i\phi_{0,\overline{\alpha}}\\
&  =\phi^{-1}\left(  A_{\overline{\alpha}}+B_{\overline{\alpha}}\right)
+C_{\overline{\alpha}}.
\end{align*}

\end{proof}

\begin{lemma}
\bigskip\label{div2}We have%
\begin{align*}
\phi_{\alpha,\beta\overline{\beta}}  &  =\frac{m+2}{2}\left[  \phi^{-1}\left(
A_{\alpha}+B_{\alpha}\right)  +C_{\alpha}\right]  +R_{\alpha\overline{\beta}%
}\phi_{\beta}-\frac{m+1}{2}\phi_{\alpha},\\
B_{\alpha\overline{\beta},\overline{\alpha}}  &  =\frac{\left(  m-1\right)
}{2}\phi^{-1}A_{\overline{\beta}}+\frac{m+1}{2}\phi^{-1}B_{\overline{\beta}%
}-\frac{\left(  m-1\right)  }{2}C_{\overline{\beta}}.
\end{align*}

\end{lemma}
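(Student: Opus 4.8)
The plan is to obtain both third-order divergences by commuting covariant derivatives until the contracted sub-Laplacian $\phi_{\alpha,\overline{\alpha}}$ appears, then substitute the governing equation (\ref{GE}) and the first-order identities of Lemma \ref{basicid}. Throughout I would use the torsion-free hypothesis $A_{\alpha\beta}=0$ together with the pseudohermitian commutation rules: for a scalar, $\phi_{,\alpha\beta}=\phi_{,\beta\alpha}$ and $\phi_{,\alpha\overline{\beta}}-\phi_{,\overline{\beta}\alpha}=i\phi_{0}\delta_{\alpha\overline{\beta}}$; for a $(1,0)$-form $\sigma_{\alpha}$, commuting a holomorphic with an antiholomorphic derivative produces a Reeb term $i\sigma_{\alpha,0}$ and the Webster curvature, whereas commuting two antiholomorphic derivatives produces only torsion, hence nothing here. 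I will also record the conjugates of Lemma \ref{basicid}, namely $g_{,\alpha}=\phi^{-1}(A_{\alpha}+B_{\alpha})+C_{\alpha}$ and $(\phi^{-1}|\partial\phi|^{2})_{,\alpha}=\phi^{-1}(A_{\alpha}+B_{\alpha})+\tfrac12\phi^{-1}(\overline{g}-\phi)\phi_{\alpha}$, and the same with barred indices.

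For the first identity I would start from $\phi_{\alpha,\beta}=\phi_{\beta,\alpha}$ to write $\phi_{\alpha,\beta\overline{\beta}}=\phi_{\beta,\alpha\overline{\beta}}$ and then commute the last two slots of the $(1,0)$-form $\phi_{,\beta}$. This exchange yields $\phi_{\beta,\overline{\beta}\alpha}$ together with the commutator contribution $i\phi_{0,\alpha}+R_{\alpha\overline{\beta}}\phi_{\beta}$, the Reeb term arising from $i\phi_{\beta,0}\delta_{\alpha\overline{\beta}}$ and the curvature contracting to the Ricci tensor. Next I use $\phi_{\beta,\overline{\beta}\alpha}=(\phi_{\beta,\overline{\beta}})_{,\alpha}$, and by (\ref{GE}) the contracted Laplacian equals $\tfrac{m}{2}(g-\phi)+\phi^{-1}|\partial\phi|^{2}$; differentiating in $\alpha$ and inserting the conjugated Lemma \ref{basicid} gives $\tfrac{m+2}{2}\phi^{-1}(A_{\alpha}+B_{\alpha})$, a term $\tfrac{m}{2}C_{\alpha}$, and some $\phi_{\alpha}$ terms. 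Finally I rewrite $i\phi_{0,\alpha}=C_{\alpha}-\tfrac12\phi^{-1}\overline{g}\phi_{\alpha}$ from the definition of $C_{\alpha}$: the extra $C_{\alpha}$ promotes $\tfrac{m}{2}C_{\alpha}$ to $\tfrac{m+2}{2}C_{\alpha}$, the $-\tfrac12\phi^{-1}\overline{g}\phi_{\alpha}$ cancels the stray $\overline{g}$ contribution, and what remains is exactly $-\tfrac{m+1}{2}\phi_{\alpha}+R_{\alpha\overline{\beta}}\phi_{\beta}$.

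For the second identity I would differentiate $B_{\alpha\overline{\beta}}$ term by term. The leading piece $\phi_{\alpha,\overline{\beta}\overline{\alpha}}$ is treated by commuting the two antiholomorphic slots, which is curvature-free under $A_{\alpha\beta}=0$, giving $\phi_{\alpha,\overline{\alpha}\overline{\beta}}=(\phi_{\alpha,\overline{\alpha}})_{,\overline{\beta}}$, again evaluated through (\ref{GE}) and Lemma \ref{basicid}. Differentiating $-\phi^{-1}\phi_{\alpha}\phi_{\overline{\beta}}$ reproduces $-\phi^{-1}A_{\overline{\beta}}$ via $\phi_{\alpha}\phi_{\overline{\beta},\overline{\alpha}}=A_{\overline{\beta}}$, together with Laplacian and $|\partial\phi|^{2}$ contributions that partly cancel, while $-\tfrac12(g-\phi)\delta_{\alpha\beta}$ contributes $-\tfrac12 g_{,\overline{\beta}}+\tfrac12\phi_{\overline{\beta}}$. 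Substituting $g_{,\overline{\beta}}$ and $(\phi^{-1}|\partial\phi|^{2})_{,\overline{\beta}}$ and collecting, the coefficients of $\phi^{-1}A_{\overline{\beta}}$, $\phi^{-1}B_{\overline{\beta}}$, $C_{\overline{\beta}}$ assemble into $\tfrac{m-1}{2}$, $\tfrac{m+1}{2}$, $-\tfrac{m-1}{2}$, while all $\phi_{\overline{\beta}}$ and $\phi^{-1}g\phi_{\overline{\beta}}$ terms cancel identically.

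The routine-but-delicate part is the bookkeeping of the numerical coefficients in these two linear combinations. The one genuine subtlety is getting the commutation formulas right: the holomorphic/antiholomorphic commutator in the first identity is the sole place the ambient curvature enters, contributing $R_{\alpha\overline{\beta}}\phi_{\beta}$, whereas the antiholomorphic/antiholomorphic commutator in the second identity is curvature-free precisely because the torsion vanishes. I expect the main risk of error to lie in the sign and normalization conventions of these commutation identities rather than in the algebra itself.
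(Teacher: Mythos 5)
Your proposal is correct and takes essentially the same route as the paper's proof: for both identities you commute derivatives to reach the contracted Laplacian $\phi_{\alpha,\overline{\alpha}}$, substitute the equation (\ref{GE}), insert Lemma \ref{basicid} and its conjugates, and absorb $i\phi_{0,\alpha}$ via the definition of $C_{\alpha}$, exactly as the paper does. The coefficient bookkeeping you describe (the promotion of $\tfrac{m}{2}C_{\alpha}$ to $\tfrac{m+2}{2}C_{\alpha}$, the cancellation of the $\overline{g}$ terms in the first identity and of the $(g-\phi)\phi_{\overline{\beta}}$ terms in the second) matches the paper's computation line for line.
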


\begin{proof}
We compute, using Lemma \ref{basicid}%
\begin{align*}
\phi_{\alpha,\beta\overline{\beta}}=  &  \phi_{\beta,\overline{\beta}\alpha
}+i\phi_{\alpha,0}+R_{\alpha\overline{\beta}}\phi_{\beta}\\
=  &  \frac{m}{2}\left(  g_{\alpha}-\phi_{\alpha}\right)  +\left(  \phi
^{-1}\left\vert \partial\phi\right\vert ^{2}\right)  _{\alpha}+i\phi
_{\alpha,0}+R_{\alpha\overline{\beta}}\phi_{\beta}\\
=  &  \frac{m}{2}\left[  \phi^{-1}\left(  A_{\alpha}+B_{\alpha}\right)
+C_{\alpha}-\phi_{\alpha}\right]  +\phi^{-1}\left(  A_{\alpha}+B_{\alpha
}\right)  +\frac{1}{2}\phi^{-1}\left(  \overline{g}-\phi\right)  \phi_{\alpha
}\\
&  +i\phi_{\alpha,0}+R_{\alpha\overline{\beta}}\phi_{\beta}\\
=  &  \frac{m}{2}\left[  \phi^{-1}\left(  A_{\alpha}+B_{\alpha}\right)
+C_{\alpha}\right]  +\phi^{-1}\left(  A_{\alpha}+B_{\alpha}\right)  +\frac
{1}{2}\phi^{-1}\overline{g}\phi_{\alpha}+i\phi_{\alpha,0}\\
&  +R_{\alpha\overline{\beta}}\phi_{\beta}-\frac{m+1}{2}\phi_{\alpha}\\
=  &  \frac{m+2}{2}\left[  \phi^{-1}\left(  A_{\alpha}+B_{\alpha}\right)
+C_{\alpha}\right]  +R_{\alpha\overline{\beta}}\phi_{\beta}-\frac{m+1}{2}%
\phi_{\alpha}.
\end{align*}
Similarly, using the equation of $\phi$%
\begin{align*}
B_{\alpha\overline{\beta},\overline{\alpha}}=  &  \phi_{\alpha,\overline
{\beta}\overline{\alpha}}-\phi^{-1}\left(  \phi_{\alpha,\overline{\alpha}}%
\phi_{\overline{\beta}}+\phi_{\alpha}\phi_{\overline{\beta},\overline{\alpha}%
}\right)  +\phi^{-2}\left\vert \partial\phi\right\vert ^{2}\phi_{\overline
{\beta}}-\frac{1}{2}\left(  g_{\overline{\beta}}-\phi_{\overline{\beta}%
}\right) \\
=  &  \phi_{\alpha,\overline{\alpha}\overline{\beta}}-\phi^{-1}\left(
\phi_{\alpha,\overline{\alpha}}\phi_{\overline{\beta}}+A_{\overline{\beta}%
}\right)  +\phi^{-2}\left\vert \partial\phi\right\vert ^{2}\phi_{\overline
{\beta}}-\frac{1}{2}\left(  g_{\overline{\beta}}-\phi_{\overline{\beta}%
}\right) \\
=  &  \frac{m-1}{2}\left(  g_{\overline{\beta}}-\phi_{\overline{\beta}%
}\right)  +\left(  \phi^{-1}\left\vert \partial\phi\right\vert ^{2}\right)
_{\overline{\beta}}-\frac{m}{2}\phi^{-1}\left(  g-\phi\right)  \phi
_{\overline{\beta}}-\phi^{-1}A_{\overline{\beta}}.
\end{align*}
Using Lemma \ref{basicid}, we obtain%
\begin{align*}
B_{\alpha\overline{\beta},\overline{\alpha}}=  &  \frac{m-1}{2}\left[
\phi^{-1}\left(  A_{\overline{\beta}}+B_{\overline{\beta}}\right)
-C_{\overline{\beta}}+\phi^{-1}\left(  g-\phi\right)  \phi_{\overline{\beta}%
}\right] \\
&  +\left[  \phi^{-1}\left(  A_{\overline{\beta}}+B_{\overline{\beta}}\right)
+\frac{1}{2}\phi^{-1}\left(  g-\phi\right)  \phi_{\overline{\beta}}\right]
-\frac{m}{2}\phi^{-1}\left(  g-\phi\right)  \phi_{\overline{\beta}}-\phi
^{-1}A_{\overline{\beta}}\\
=  &  \frac{m-1}{2}\left[  \phi^{-1}\left(  A_{\overline{\beta}}%
+B_{\overline{\beta}}\right)  -C_{\overline{\beta}}\right]  +\phi
^{-1}B_{\overline{\beta}}\\
=  &  \frac{m-1}{2}\left(  \phi^{-1}A_{\overline{\beta}}-C_{\overline{\beta}%
}\right)  +\frac{m+1}{2}\phi^{-1}B_{\overline{\beta}}.
\end{align*}

\end{proof}

We are ready to calculated the divergence for the three vector fields
$A_{\alpha},B_{\alpha}$ and $C_{\alpha}$.

\begin{lemma}
\bigskip\label{divM}We have%
\begin{align*}
A_{\alpha,\overline{\alpha}}=  &  \frac{m+2}{2}\left[  C_{\alpha}+\phi
^{-1}\left(  A_{\alpha}+B_{\alpha}\right)  \right]  \phi_{\overline{\alpha}%
}+\left\vert \phi_{\alpha\beta}\right\vert ^{2}+Q,\\
B_{\alpha,\overline{\alpha}}=  &  \left[  \frac{\left(  m-1\right)  }{2}%
\phi^{-1}A_{\overline{\alpha}}+\frac{m+1}{2}\phi^{-1}B_{\overline{\alpha}%
}-\frac{\left(  m-1\right)  }{2}C_{\overline{\alpha}}\right]  \phi_{\alpha
}+\phi^{-1}B_{\alpha}\phi_{\overline{\alpha}}+\left\vert B_{\alpha
\overline{\beta}}\right\vert ^{2}\\
C_{\alpha,\overline{\alpha}}=  &  \frac{m+2}{2}\phi^{-1}C_{\alpha}%
\phi_{\overline{\alpha}}-\frac{m+1}{2}\phi^{-1}C_{\overline{\alpha}}%
\phi_{\alpha}+\frac{1}{2}\phi^{-2}\left(  A_{\overline{\alpha}}+B_{\overline
{\alpha}}\right)  \phi_{\alpha}+S,
\end{align*}
where
\begin{align*}
Q  &  =R_{\alpha\overline{\beta}}\phi_{\beta}\phi_{\overline{\alpha}}%
-\frac{m+1}{2}\left\vert \partial\phi\right\vert ^{2}\\
S  &  =-\frac{m}{2}\left[  \phi_{0,0}-\frac{1}{2}\phi^{-1}\left(  \frac
{1-\phi^{2}}{4}+\phi^{-1}\left\vert \partial\phi\right\vert ^{2}+\phi
^{-2}\left\vert \partial\phi\right\vert ^{4}+\phi_{0}^{2}\right)  \right]
\end{align*}

\end{lemma}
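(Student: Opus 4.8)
The plan is to obtain all three divergences by applying the product rule to the definitions $A_{\alpha}=\phi_{\alpha\beta}\phi_{\overline{\beta}}$, $B_{\alpha}=B_{\alpha\overline{\beta}}\phi_{\beta}$ and $C_{\alpha}=i\phi_{0,\alpha}+\frac{1}{2}\phi^{-1}\overline{g}\phi_{\alpha}$, and then to feed in the second-order divergence formulas of Lemma \ref{div2} together with the first-order identities of Lemma \ref{basicid}, so that every term is expressed through $A_{\alpha},B_{\alpha},C_{\alpha}$, the curvature scalar $Q$, the higher-order scalar $S$, and the pointwise norms $\left\vert \phi_{\alpha\beta}\right\vert^{2}$ and $\left\vert B_{\alpha\overline{\beta}}\right\vert^{2}$. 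For the first formula I would write $A_{\alpha,\overline{\alpha}}=\phi_{\alpha\beta,\overline{\alpha}}\phi_{\overline{\beta}}+\phi_{\alpha\beta}\phi_{\overline{\beta},\overline{\alpha}}$. In the first term the vanishing torsion makes the holomorphic Hessian symmetric, so $\phi_{\alpha\beta,\overline{\alpha}}=\phi_{\beta,\alpha\overline{\alpha}}$, which is exactly the first line of Lemma \ref{div2}; contracting against $\phi_{\overline{\beta}}$ produces the term $\frac{m+2}{2}\left[C_{\alpha}+\phi^{-1}(A_{\alpha}+B_{\alpha})\right]\phi_{\overline{\alpha}}$ together with $Q=R_{\alpha\overline{\beta}}\phi_{\beta}\phi_{\overline{\alpha}}-\frac{m+1}{2}\left\vert\partial\phi\right\vert^{2}$. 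In the second term, again by symmetry $\phi_{\overline{\beta},\overline{\alpha}}=\overline{\phi_{\alpha\beta}}$, so it collapses to $\left\vert\phi_{\alpha\beta}\right\vert^{2}$.

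For $B_{\alpha,\overline{\alpha}}=B_{\alpha\overline{\beta},\overline{\alpha}}\phi_{\beta}+B_{\alpha\overline{\beta}}\phi_{\beta,\overline{\alpha}}$, the first piece is handled directly by the second line of Lemma \ref{div2} contracted with $\phi_{\beta}$, giving, after relabeling, the bracket $\left[\frac{m-1}{2}\phi^{-1}A_{\overline{\alpha}}+\frac{m+1}{2}\phi^{-1}B_{\overline{\alpha}}-\frac{m-1}{2}C_{\overline{\alpha}}\right]\phi_{\alpha}$. For the second piece I would eliminate $\phi_{\beta,\overline{\alpha}}$ through the definition of $B_{\beta\overline{\alpha}}$, writing $\phi_{\beta,\overline{\alpha}}=B_{\beta\overline{\alpha}}+\phi^{-1}\phi_{\beta}\phi_{\overline{\alpha}}+\frac{1}{2}(g-\phi)\delta_{\beta\overline{\alpha}}$. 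Three things then happen: the Hermitian symmetry $\overline{B_{\alpha\overline{\beta}}}=B_{\beta\overline{\alpha}}$ (which holds because the commutator $\phi_{\alpha,\overline{\beta}}-\phi_{\overline{\beta},\alpha}=i\phi_{0}\delta_{\alpha\overline{\beta}}$ is matched by the imaginary part $g-\overline{g}=2i\phi_{0}$) turns $B_{\alpha\overline{\beta}}B_{\beta\overline{\alpha}}$ into $\left\vert B_{\alpha\overline{\beta}}\right\vert^{2}$; the contraction $B_{\alpha\overline{\beta}}\phi_{\beta}$ reproduces $B_{\alpha}$, giving $\phi^{-1}B_{\alpha}\phi_{\overline{\alpha}}$; and the $\delta$-term is annihilated by the trace-free condition $B_{\alpha\overline{\alpha}}=0$, which is precisely (\ref{GE}). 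Collecting yields the stated second formula.

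The third formula is the crux. Differentiating gives $C_{\alpha,\overline{\alpha}}=i\phi_{0,\alpha\overline{\alpha}}+\frac{1}{2}(\phi^{-1}\overline{g}\phi_{\alpha})_{,\overline{\alpha}}$. The second, purely algebraic, group I would expand using $(\phi^{-1})_{\overline{\alpha}}=-\phi^{-2}\phi_{\overline{\alpha}}$, the formula for $\overline{g}_{\overline{\alpha}}$ in Lemma \ref{basicid}, and the trace equation $\phi_{\alpha,\overline{\alpha}}=\frac{m}{2}(g-\phi)+\phi^{-1}\left\vert\partial\phi\right\vert^{2}$. The genuinely delicate term is the second-order Reeb contribution $i\phi_{0,\alpha\overline{\alpha}}$. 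Using $A_{\alpha\beta}=0$ to discard all torsion commutators, I would commute the Reeb derivative outward, first as $\phi_{0,\alpha}=\phi_{\alpha,0}$ on the scalar and then as $\phi_{\alpha,0\overline{\alpha}}=\phi_{\alpha,\overline{\alpha}0}=(\phi_{\alpha,\overline{\alpha}})_{,0}$ on the $(1,0)$ tensor $\phi_{\alpha}$, thereby reducing $\phi_{0,\alpha\overline{\alpha}}$ to the Reeb derivative of the trace. Differentiating (\ref{GE}) in the Reeb direction then produces the term $\frac{m}{2}i\phi_{0,0}$, whose $i$-multiple is the leading piece $-\frac{m}{2}\phi_{0,0}$ of $S$, along with $\phi_{0}$- and $(\phi^{-1}\left\vert\partial\phi\right\vert^{2})_{,0}$-contributions; the remaining first Reeb derivatives $\phi_{0,\alpha},\phi_{0,\overline{\alpha}}$ I would eliminate in favor of $C_{\alpha},C_{\overline{\alpha}}$ and algebraic data through the definition of $C_{\alpha}$ and its conjugate.

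The main obstacle is exactly this last computation. Everything else is bookkeeping layered on Lemmas \ref{basicid} and \ref{div2}, but the Reeb term forces one to track the pseudohermitian commutation identities carefully and, crucially, to verify that after all first-order substitutions the polynomial remainder collapses to precisely $-\frac{m}{2}$ times the defect $\phi_{0,0}-\frac{1}{2}\phi^{-1}\left(\frac{1-\phi^{2}}{4}+\phi^{-1}\left\vert\partial\phi\right\vert^{2}+\phi^{-2}\left\vert\partial\phi\right\vert^{4}+\phi_{0}^{2}\right)$ of the higher-order identity recorded in the Remark, while the vector-field terms organize into $\frac{m+2}{2}\phi^{-1}C_{\alpha}\phi_{\overline{\alpha}}-\frac{m+1}{2}\phi^{-1}C_{\overline{\alpha}}\phi_{\alpha}+\frac{1}{2}\phi^{-2}(A_{\overline{\alpha}}+B_{\overline{\alpha}})\phi_{\alpha}$. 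Matching these constants exactly, with no stray curvature term surviving (in contrast to $A_{\alpha,\overline{\alpha}}$, where $Q$ persists), is where the real care is needed.
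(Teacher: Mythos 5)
Your proposal is correct and follows essentially the same route as the paper: the first two identities are exactly the product rule fed through Lemmas \ref{basicid} and \ref{div2} (together with the Hermitian symmetry of $B_{\alpha\overline{\beta}}$ and the trace condition $B_{\alpha\overline{\alpha}}=0$), and for the third identity the paper likewise uses torsion-freeness to write $i\phi_{0,\alpha\overline{\alpha}}=i\phi_{\alpha,\overline{\alpha}0}$, differentiates (\ref{GE}) in the Reeb direction, computes $\left(\phi^{-1}\left\vert\partial\phi\right\vert^{2}\right)_{0}=i\phi^{-1}\left(\phi_{\beta}C_{\overline{\beta}}-\phi_{\overline{\beta}}C_{\beta}\right)$ via the definition of $C_{\alpha}$, and packages the scalar remainder into $S$ by expanding $\left\vert g\right\vert^{2}$. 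One cosmetic quibble: the symmetry $\phi_{\alpha,\beta}=\phi_{\beta,\alpha}$ holds for any scalar irrespective of torsion, so it need not be attributed to $A_{\alpha\beta}=0$ (torsion only enters when commuting with the Reeb direction).
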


\begin{proof}
The 1st two identities follow directly from Lemma \ref{basicid} and Lemma
\ref{div2}. To prove the 3rd identity, we first note $\phi_{0,\alpha}%
=\phi_{\alpha,0}$ and $\phi_{\alpha,0\overline{\beta}}=\phi_{\alpha
,\overline{\beta}0}$ as we assume that $\left(  M,\theta\right)  $ is
torsion-free. We compute%
\begin{align*}
\left(  \phi^{-1}\left\vert \partial\phi\right\vert ^{2}\right)  _{0}=  &
\phi^{-1}\left(  \phi_{\beta}\phi_{\overline{\beta},0}+\phi_{\beta,0}%
\phi_{\overline{\beta}}\right)  -\phi^{-2}\left\vert \partial\phi\right\vert
^{2}\phi_{0}\\
=  &  \phi^{-1}\phi_{\beta}\left(  iC_{\overline{\beta}}-\frac{i}{2}\phi
^{-1}g\phi_{\overline{\beta}}\right)  +\phi^{-1}\phi_{\overline{\beta}}\left(
-iC_{\beta}+\frac{i}{2}\phi^{-1}\overline{g}\phi_{\beta}\right) \\
&  -\phi^{-2}\left\vert \partial\phi\right\vert ^{2}\phi_{0}\\
=  &  i\phi^{-1}\left(  \phi_{\beta}C_{\overline{\beta}}-\phi_{\overline
{\beta}}C_{\beta}\right)  +\frac{i}{2}\phi^{-2}\left\vert \partial
\phi\right\vert ^{2}\left(  \overline{g}-g\right)  -\phi^{-2}\left\vert
\partial\phi\right\vert ^{2}\phi_{0}\\
=  &  i\phi^{-1}\left(  \phi_{\beta}C_{\overline{\beta}}-\phi_{\overline
{\beta}}C_{\beta}\right)  .
\end{align*}
Using these identities as well as the previous lemmas, we compute%
\begin{align*}
C_{\alpha,\overline{\alpha}}=  &  i\phi_{\alpha,\overline{\alpha}0}+\frac
{1}{2}\phi^{-1}\overline{g}_{\overline{\alpha}}\phi_{\alpha}+\frac{1}{2}%
\phi^{-1}\overline{g}\phi_{\alpha\overline{\alpha}}-\frac{1}{2}\phi
^{-2}\overline{g}\left\vert \partial\phi\right\vert ^{2}\\
=  &  i\left[  \frac{m}{2}\left(  i\phi_{00}-\frac{1}{2}\phi_{0}\right)
+\frac{m+2}{2}\left(  \phi^{-1}\left\vert \partial\phi\right\vert ^{2}\right)
_{0}\right]  +\frac{1}{2}\phi^{-1}\phi_{\alpha}\left[  \phi^{-1}\left(
A_{\overline{\alpha}}+B_{\overline{\alpha}}\right)  +C_{\overline{\alpha}%
}\right] \\
&  +\frac{m}{4}\phi^{-1}\overline{g}\left(  g-\phi\right) \\
=  &  -\frac{m}{4}i\phi_{0}-\frac{m}{2}\phi_{00}-\frac{m+2}{2}\phi^{-1}\left(
\phi_{\alpha}C_{\overline{\alpha}}-\phi_{\overline{\alpha}}C_{\alpha}\right)
\\
&  +\frac{1}{2}\phi^{-1}\phi_{\alpha}\left[  \phi^{-1}\left(  A_{\overline
{\alpha}}+B_{\overline{\alpha}}\right)  +C_{\overline{\alpha}}\right]
+\frac{m}{4}\phi^{-1}\overline{g}\left(  g-\phi\right) \\
=  &  \frac{m+2}{2}\phi^{-1}\phi_{\overline{\alpha}}C_{\alpha}-\frac{m+1}%
{2}\phi^{-1}\phi_{\alpha}C_{\overline{\alpha}}+\frac{1}{2}\phi^{-2}%
\phi_{\alpha}\left(  A_{\overline{\alpha}}+B_{\overline{\alpha}}\right) \\
&  -\frac{m}{2}\phi_{00}+\frac{m}{4}\phi^{-1}\left\vert g\right\vert
^{2}-\frac{m}{4}\left(  \overline{g}+i\phi_{0}\right) \\
=  &  \frac{m+2}{2}\phi^{-1}\phi_{\overline{\alpha}}C_{\alpha}-\frac{m+1}%
{2}\phi^{-1}\phi_{\alpha}C_{\overline{\alpha}}+\frac{1}{2}\phi^{-2}%
\phi_{\alpha}\left(  A_{\overline{\alpha}}+B_{\overline{\alpha}}\right) \\
&  -\frac{m}{2}\left[  \phi_{00}-\frac{1}{2}\phi^{-1}\left\vert g\right\vert
^{2}+\frac{1}{2}\left(  \frac{1}{2}+\frac{1}{2}\phi+\phi^{-1}\left\vert
\partial\phi\right\vert ^{2}\right)  \right]  .
\end{align*}
Using the formula for $g$ in the last term yields the 3rd identity.
\end{proof}

These three formulas demonstrate that the three vector fields $A_{\alpha
},B_{\alpha}$ and $C_{\alpha}$ intertwine with one another. The basic
strategy, following Jerison-Lee, is to come up with a linear combination of
the three vectors fields whose divergence is a sum of squares. The divergence
of $C_{\alpha}$ is much more complicated as the last term $S$ involves 2nd
order derivatives in the $T$ direction which we don't know how to control. But
fortunately this extra term is purely real. Another simple but crucial fact
for the final formula is the following.

\begin{lemma}
\label{real}$B_{\alpha}\phi_{\overline{\alpha}}$ is real, i.e. $B_{\alpha}%
\phi_{\overline{\alpha}}=B_{\overline{\alpha}}\phi_{\alpha}$.
\end{lemma}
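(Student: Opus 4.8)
The plan is to prove the identity by showing that the scalar $B_{\alpha}\phi_{\overline{\alpha}}$ equals its own complex conjugate, since that conjugate is exactly $B_{\overline{\alpha}}\phi_{\alpha}$. Expanding the contraction from the definition $B_{\alpha}=B_{\alpha\overline{\beta}}\phi_{\beta}$ and using $\phi_{\alpha}\phi_{\overline{\alpha}}=\left\vert \partial\phi\right\vert ^{2}$ twice, I would first write
\[
B_{\alpha}\phi_{\overline{\alpha}}=\phi_{\alpha,\overline{\beta}}\phi_{\beta}\phi_{\overline{\alpha}}-\phi^{-1}\left\vert \partial\phi\right\vert ^{4}-\tfrac{1}{2}\left(  g-\phi\right)  \left\vert \partial\phi\right\vert ^{2}.
\]
Conjugating term by term (and using that $\phi$ and $\left\vert \partial\phi\right\vert ^{2}$ are real) gives
\[
B_{\overline{\alpha}}\phi_{\alpha}=\phi_{\overline{\alpha},\beta}\phi_{\overline{\beta}}\phi_{\alpha}-\phi^{-1}\left\vert \partial\phi\right\vert ^{4}-\tfrac{1}{2}\left(  \overline{g}-\phi\right)  \left\vert \partial\phi\right\vert ^{2},
\]
so that the $\phi^{-1}\left\vert \partial\phi\right\vert ^{4}$ terms cancel upon subtraction and only two potentially non-real pieces survive.

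Next I would isolate those two pieces. The scalar terms combine into $-\tfrac{1}{2}\left(  g-\overline{g}\right)  \left\vert \partial\phi\right\vert ^{2}=-i\phi_{0}\left\vert \partial\phi\right\vert ^{2}$, since $g-\overline{g}=2i\phi_{0}$ by the definition of $g$. For the second covariant derivatives, after relabelling indices in the conjugate term the difference is $\left(  \phi_{\alpha,\overline{\beta}}-\phi_{\overline{\beta},\alpha}\right)  \phi_{\beta}\phi_{\overline{\alpha}}$. This is precisely where the torsion-free hypothesis enters: the CR commutation (Ricci) identity gives $\phi_{\alpha,\overline{\beta}}-\phi_{\overline{\beta},\alpha}=i\phi_{0}\delta_{\alpha\beta}$, and contracting yields $i\phi_{0}\left\vert \partial\phi\right\vert ^{2}$. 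Hence
\[
B_{\alpha}\phi_{\overline{\alpha}}-B_{\overline{\alpha}}\phi_{\alpha}=i\phi_{0}\left\vert \partial\phi\right\vert ^{2}-i\phi_{0}\left\vert \partial\phi\right\vert ^{2}=0,
\]
which is the claim.

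The computation is short, so there is no real analytic obstacle here; the whole matter is careful bookkeeping. The one step to watch is the exact form and sign of the commutation relation trading $\phi_{\alpha,\overline{\beta}}$ for $\phi_{\overline{\beta},\alpha}$ — this is the very identity already used silently in the proof of Lemma \ref{basicid}, where $\phi_{\beta,\overline{\alpha}}\phi_{\overline{\beta}}$ was replaced by $\overline{\phi_{\alpha,\overline{\beta}}\phi_{\beta}}+i\phi_{0}\phi_{\overline{\alpha}}$. The only place a sign or factor could slip is in matching the $i\phi_{0}$ produced by the Levi form against the $i\phi_{0}$ coming from $\operatorname{Im}g$, and it is exactly this matching that makes the two anti-real contributions cancel.
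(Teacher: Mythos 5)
Your computation is correct, and it runs on exactly the two inputs the paper uses: the mixed commutation identity $\phi_{\alpha,\overline{\beta}}-\phi_{\overline{\beta},\alpha}=i\phi_{0}\delta_{\alpha\beta}$ and the splitting $g-\overline{g}=2i\phi_{0}$. The difference is one of packaging. The paper never forms the difference $B_{\alpha}\phi_{\overline{\alpha}}-B_{\overline{\alpha}}\phi_{\alpha}$; instead it absorbs the $\tfrac{i}{2}\phi_{0}\delta_{\alpha\beta}$ hidden in $\operatorname{Im}g$ into the second-derivative term and uses the commutation identity to rewrite
\[
B_{\alpha\overline{\beta}}=\tfrac{1}{2}\left(  \phi_{\alpha,\overline{\beta}
}+\phi_{\overline{\beta},\alpha}\right)  -\phi^{-1}\phi_{\alpha}
\phi_{\overline{\beta}}-\tfrac{1}{2}\left(  \operatorname{Re}g-\phi\right)
\delta_{\alpha\beta},
\]
which is manifestly Hermitian; reality of $B_{\alpha}\phi_{\overline{\alpha}}=B_{\alpha\overline{\beta}}\phi_{\beta}\phi_{\overline{\alpha}}$ is then immediate, as a Hermitian form evaluated on a vector. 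That tensor-level statement buys slightly more than your scalar one: it shows $B_{\alpha\overline{\beta}}\xi_{\beta}\overline{\xi_{\alpha}}$ is real for \emph{every} $(1,0)$ vector $\xi$, not only $\xi=\partial\phi$, though the contracted statement is all the paper needs later. One correction to your commentary: the identity $\phi_{\alpha,\overline{\beta}}-\phi_{\overline{\beta},\alpha}=i\phi_{0}\delta_{\alpha\beta}$ is \emph{not} where the torsion-free hypothesis enters. For a function, this commutation of one holomorphic and one antiholomorphic derivative holds on any pseudohermitian manifold; the torsion $A_{\alpha\beta}$ appears only when commuting derivatives involving the Reeb direction (e.g. $\phi_{0,\alpha}$ versus $\phi_{\alpha,0}$), which is where the paper genuinely uses $A_{\alpha\beta}=0$, namely in Lemma \ref{divM}. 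In particular, both your argument and the paper's show that Lemma \ref{real} itself requires no torsion assumption at all.
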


\begin{proof}
We have%
\begin{align*}
B_{\alpha\overline{\beta}}  &  =\phi_{\alpha,\overline{\beta}}-\frac{i}{2}%
\phi_{0}\delta_{\alpha\beta}-\phi^{-1}\phi_{\alpha}\phi_{\overline{\beta}%
}-\frac{1}{2}\left(  \operatorname{Re}g-\phi\right)  \delta_{\alpha\beta}\\
&  =\frac{1}{2}\left(  \phi_{\alpha,\overline{\beta}}+\phi_{\overline{\alpha
},\beta}\right)  -\phi^{-1}\phi_{\alpha}\phi_{\overline{\beta}}-\frac{1}%
{2}\left(  \operatorname{Re}g-\phi\right)  \delta_{\alpha\beta}.
\end{align*}
It is clearly hermitian. Therefore $B_{\alpha}\phi_{\overline{\alpha}%
}=B_{\alpha\overline{\beta}}\phi_{\beta}\phi_{\overline{\alpha}}$ is real.
\end{proof}

We can now state the final formula, which can be viewed as a generalization of
the formula (4.2) on the Heisenberg group in \cite{JL}.

\begin{theorem}
\label{JLn}Let $\left(  M^{2m+1},\theta\right)  $ is a closed pseudohermitian
manifold with torsion $A_{\alpha\beta}=0$. Suppose $\phi\in C^{\infty}\left(
M\right)  $ is positive and satisfies the equation (\ref{GE}). Then
\begin{align*}
&  \operatorname{Re}\left[  \left(  \phi^{-\left(  m+1\right)  }\left(
\left(  \overline{g}+3i\phi_{0}\right)  \phi^{-1}A_{\alpha}+\left(
\overline{g}-i\phi_{0}\right)  \phi^{-1}B_{\alpha}-i3\phi_{0}C_{\alpha
}\right)  \right)  _{\overline{\alpha}}\right] \\
=  &  \phi^{-\left(  m+1\right)  }\left[  \left(  \frac{1}{2}+\frac{1}{2}%
\phi\right)  \left(  \left\vert \phi_{\alpha\beta}\right\vert ^{2}+\left\vert
B_{\alpha\overline{\beta}}\right\vert ^{2}\right)  +\left(  \frac{1}{2}%
+\frac{1}{2}\phi+\phi^{-1}\left\vert \partial\phi\right\vert ^{2}\right)
Q\right. \\
&  \left.  +\left\vert \phi^{-1}A_{\alpha}-C_{\alpha}\right\vert
^{2}+\left\vert \phi^{-1}B_{\alpha}+C_{\alpha}\right\vert ^{2}+\left\vert
C_{\alpha}\right\vert ^{2}+\phi^{-1}\left\vert \phi_{\alpha\beta}%
\phi_{\overline{\gamma}}+B_{\alpha\overline{\gamma}}\phi_{\beta}\right\vert
^{2}\right]  ,
\end{align*}
where
\[
Q=R_{\alpha\overline{\beta}}\phi_{\beta}\phi_{\overline{\alpha}}-\frac{m+1}%
{2}\left\vert \partial\phi\right\vert ^{2}.
\]

\end{theorem}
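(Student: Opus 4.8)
The plan is to compute the divergence of the weighted vector field directly and then take real parts. Write
\[
W_\alpha = (\overline{g}+3i\phi_0)\phi^{-1}A_\alpha + (\overline{g}-i\phi_0)\phi^{-1}B_\alpha - 3i\phi_0 C_\alpha,
\]
so the left side is $\operatorname{Re}[(\phi^{-(m+1)}W_\alpha)_{,\overline{\alpha}}]$. First I would apply the product rule,
\[
(\phi^{-(m+1)}W_\alpha)_{,\overline{\alpha}} = \phi^{-(m+1)}W_{\alpha,\overline{\alpha}} - (m+1)\phi^{-(m+2)}\phi_{\overline{\alpha}}W_\alpha,
\]
and expand $W_{\alpha,\overline{\alpha}}$ by Leibniz so that every term is either a coefficient times one of $A_{\alpha,\overline{\alpha}},B_{\alpha,\overline{\alpha}},C_{\alpha,\overline{\alpha}}$, or a derivative of a coefficient times one of $A_\alpha,B_\alpha,C_\alpha$. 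For the three divergences I would substitute Lemma \ref{divM} verbatim; for the derivatives of the scalar coefficients I would use Lemma \ref{basicid}, which records $\overline{g}_{\overline{\alpha}} = \phi^{-1}(A_{\overline{\alpha}}+B_{\overline{\alpha}}) + C_{\overline{\alpha}}$, together with $\phi_{0,\overline{\alpha}} = iC_{\overline{\alpha}} - \tfrac{i}{2}\phi^{-1}g\phi_{\overline{\alpha}}$, obtained by conjugating the definition of $C_\alpha$ (both $\phi$ and $\phi_0$ are real). After this substitution the whole expression becomes a polynomial in $A_\alpha,B_\alpha,C_\alpha$, their conjugates, the scalars $\phi,\phi_0,g$, and the quadratic densities $|\phi_{\alpha\beta}|^2,|B_{\alpha\overline{\beta}}|^2$ and the curvature term $Q$.

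The decisive simplification comes from the purely imaginary coefficient $-3i\phi_0$ in front of $C_\alpha$. The only place the uncontrollable second Reeb derivative $\phi_{0,0}$ enters is the term $S$ in the divergence $C_{\alpha,\overline{\alpha}}$ of Lemma \ref{divM}, and it appears in the total divergence solely as $-3i\phi_0\,\phi^{-(m+1)}S$. Since $\phi_0,\phi$ and $S$ are all real, this contribution is purely imaginary and is annihilated by $\operatorname{Re}$. This is precisely why the identity is asserted only for the real part: the coefficient $-3i\phi_0$ is engineered to quarantine the one quantity we cannot control into the imaginary part. I would also invoke Lemma \ref{real}, that $B_\alpha\phi_{\overline{\alpha}}$ is real, to symmetrize the $B$-transport terms when the real part is extracted.

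What remains is to reorganize the surviving real terms into the stated sum of squares. The densities $|\phi_{\alpha\beta}|^2$ and $|B_{\alpha\overline{\beta}}|^2$ come straight from the divergences of $A_\alpha$ and $B_\alpha$ and acquire the weight $\tfrac12+\tfrac12\phi$; the curvature enters only through $Q$ in $A_{\alpha,\overline{\alpha}}$, so its coefficient is inherited from $\overline{g}$ and equals $\operatorname{Re}(\overline{g}) = \tfrac12+\tfrac12\phi+\phi^{-1}|\partial\phi|^2$. The transport terms $\phi^{-1}A_\alpha\phi_{\overline{\alpha}},\ \phi^{-1}B_\alpha\phi_{\overline{\alpha}},\ C_\alpha\phi_{\overline{\alpha}}$ from Lemma \ref{divM}, together with the products $A_\alpha C_{\overline{\alpha}}$, $B_\alpha C_{\overline{\alpha}}$ and $|C_\alpha|^2$ produced when the derivative falls on $\overline{g}$ and $\phi_0$, must complete into $|\phi^{-1}A_\alpha - C_\alpha|^2 + |\phi^{-1}B_\alpha + C_\alpha|^2 + |C_\alpha|^2$. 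Finally the mixed square is recognized through the algebraic identity
\[
|\phi_{\alpha\beta}\phi_{\overline{\gamma}} + B_{\alpha\overline{\gamma}}\phi_\beta|^2 = (|\phi_{\alpha\beta}|^2 + |B_{\alpha\overline{\beta}}|^2)|\partial\phi|^2 + 2\operatorname{Re}(A_\alpha B_{\overline{\alpha}}),
\]
which absorbs the $\phi^{-1}|\partial\phi|^2$-weighted copies of $|\phi_{\alpha\beta}|^2,|B_{\alpha\overline{\beta}}|^2$ and the $A$--$B$ cross terms.

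I expect the main obstacle to be exactly this last bookkeeping: one must check that every indefinite cross term cancels and that nothing is left outside the listed squares. That this happens is not accidental but is dictated by the model computation of Section 3, where the three tensors $\phi_{\alpha\beta}$, $B_{\alpha\overline{\beta}}$ and $C_\alpha$ all vanish and $Q=0$, so both sides must vanish identically; the coefficients $\overline{g}+3i\phi_0,\ \overline{g}-i\phi_0,\ -3i\phi_0$ and the weight $\phi^{-(m+1)}$ are reverse-engineered from that model so that the non-square remainders cancel. Once these coefficients are fixed the computation is long but entirely mechanical, requiring only the two divergence lemmas, the derivative formulas for $\overline{g}$ and $\phi_0$, and the reality facts that isolate the uncontrollable $\phi_{0,0}$ term in the imaginary part.
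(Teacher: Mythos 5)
Your proposal follows essentially the same route as the paper's proof: product-rule expansion of the divergence, substitution of Lemma \ref{divM} and Lemma \ref{basicid} together with the conjugated formula for $\phi_{0,\overline{\alpha}}$, the observation that the uncontrollable $\phi_{0,0}$-term $S$ enters only through the purely imaginary combination $-3i\phi_{0}S$ and is annihilated by $\operatorname{Re}$, Lemma \ref{real} for the hermitian symmetry, and a final regrouping into squares. Your explicit expansion
\[
\left\vert \phi_{\alpha\beta}\phi_{\overline{\gamma}}+B_{\alpha\overline{\gamma}}\phi_{\beta}\right\vert ^{2}=\left(  \left\vert \phi_{\alpha\beta}\right\vert ^{2}+\left\vert B_{\alpha\overline{\beta}}\right\vert ^{2}\right)  \left\vert \partial\phi\right\vert ^{2}+2\operatorname{Re}\left(  A_{\alpha}B_{\overline{\alpha}}\right)
\]
is correct and is precisely the ``elementary'' final step that the paper leaves to the reader.
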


\begin{proof}
By direct calculation, the divergence on the LHS is given by%
\begin{align*}
&  \phi^{-\left(  m+1\right)  }\left[  \left(  \overline{g}+3i\phi_{0}\right)
\left(  \phi^{-1}A_{\alpha,\overline{\alpha}}-\left(  m+2\right)  \phi
^{-2}A_{\alpha}\phi_{\overline{\alpha}}\right)  +\phi^{-1}\left(  \overline
{g}_{\overline{\alpha}}+3i\phi_{0,\overline{\alpha}}\right)  A_{\alpha}\right.
\\
&  +\left(  \overline{g}-i\phi_{0}\right)  \left(  \phi^{-1}B_{\alpha
,\overline{\alpha}}-\left(  m+2\right)  \phi^{-2}B_{\alpha}\phi_{\overline
{\alpha}}\right)  +\phi^{-1}\left(  \overline{g}_{\overline{\alpha}}%
-i\phi_{0,\overline{\alpha}}\right)  B_{\alpha}\\
&  \left.  -i3\left(  \phi_{0}C_{\alpha,\overline{\alpha}}+\phi_{0,\overline
{\alpha}}C_{\alpha}-\left(  m+1\right)  \phi^{-1}\phi_{0}C_{\alpha}%
\phi_{\overline{\alpha}}\right)  \right]  .
\end{align*}
Using Lemma as well as the formula $-i\phi_{0,\overline{\alpha}}%
=C_{\overline{\alpha}}-\frac{1}{2}\phi^{-1}g\phi_{\overline{\alpha}}$ the
above expression is (ignoring the factor $\phi^{-\left(  m+1\right)  }$)%
\begin{align*}
&  \left(  \overline{g}+3i\phi_{0}\right)  \left(  \frac{m+2}{2}\phi
^{-1}C_{\alpha}\phi_{\overline{\alpha}}-\frac{m+2}{2}\phi^{-2}A_{\alpha}%
\phi_{\overline{\alpha}}+\frac{m+2}{2}\phi^{-2}B_{\alpha}\phi_{\overline
{\alpha}}+\left\vert \phi_{\alpha\beta}\right\vert ^{2}+Q\right) \\
&  +\phi^{-1}\left(  \phi^{-1}\left(  A_{\overline{\alpha}}+B_{\overline
{\alpha}}\right)  -2C_{\overline{\alpha}}+\frac{3}{2}\phi^{-1}g\phi
_{\overline{\alpha}}\right)  A_{\alpha}\\
&  +\left(  \overline{g}-i\phi_{0}\right)  \left(  -\frac{m-1}{2}\phi
^{-1}C_{\overline{\alpha}}\phi_{\alpha}+\frac{m-1}{2}\phi^{-2}A_{\overline
{\alpha}}\phi_{\alpha}-\frac{m+1}{2}\phi^{-2}B_{\overline{\alpha}}\phi
_{\alpha}+\left\vert B_{\alpha\overline{\beta}}\right\vert ^{2}\right) \\
&  +\phi^{-1}\left(  \phi^{-1}\left(  A_{\overline{\alpha}}+B_{\overline
{\alpha}}\right)  +2C_{\overline{\alpha}}-\frac{1}{2}\phi^{-1}g\phi
_{\overline{\alpha}}\right)  B_{\alpha}\\
&  -i3\left(  \phi_{0}\left(  -\frac{m}{2}\phi^{-1}C_{\alpha}\phi
_{\overline{\alpha}}-\frac{m+1}{2}\phi^{-1}C_{\overline{\alpha}}\phi_{\alpha
}+\frac{1}{2}\phi^{-2}\left(  A_{\overline{\alpha}}+B_{\overline{\alpha}%
}\right)  \phi_{\alpha}\right)  \right) \\
&  +3\left(  C_{\overline{\alpha}}-\frac{1}{2}\phi^{-1}g\phi_{\overline
{\alpha}}\right)  C_{\alpha}-3i\phi_{0}S.
\end{align*}
After expansions and cancelations, we arrive at%
\begin{align*}
&  \left(  \overline{g}+3i\phi_{0}\right)  \left(  \left\vert \phi
_{\alpha\beta}\right\vert ^{2}+Q\right)  +\left(  \overline{g}-i\phi
_{0}\right)  \left\vert B_{\alpha\overline{\beta}}\right\vert ^{2}+\phi
^{-2}\left\vert A_{\alpha}+B_{\alpha}\right\vert ^{2}+2\phi^{-1}\left(
B_{\alpha}-A_{\alpha}\right)  C_{\overline{\alpha}}\\
&  +3\left\vert C_{\alpha}\right\vert ^{2}+\phi^{-2}E_{1}+\phi^{-2}E_{2}%
+\phi^{-1}E_{3}-3i\phi_{0}S,
\end{align*}
where
\begin{align*}
E_{1}  &  =\frac{\left(  m-1\right)  }{2}\left(  A_{\overline{\alpha}}%
\phi_{\alpha}-A_{\alpha}\phi_{\overline{\alpha}}\right)  \operatorname{Re}%
g-\left(  m+\frac{1}{2}\right)  i\phi_{0}\left(  A_{\alpha}\phi_{\overline
{\alpha}}+A_{\overline{\alpha}}\phi_{\alpha}\right)  ,\\
E_{2}  &  =\left(  2m+1\right)  i\phi_{0}B_{\alpha}\phi_{\overline{\alpha}}\\
E_{3}  &  =\frac{\left(  m-1\right)  }{2}\left(  C_{\alpha}\phi_{\overline
{\alpha}}-C_{\overline{\alpha}}\phi_{\alpha}\right)  \operatorname{Re}%
g+\frac{5m+1}{2}i\phi_{0}\left(  C_{\alpha}\phi_{\overline{\alpha}%
}+C_{\overline{\alpha}}\phi_{\alpha}\right)  .
\end{align*}
It is clear that all these three terms as well as the last term (recall $S$ is
real) are purely imaginary. Therefore
\begin{align*}
&  \operatorname{Re}\left[  \left(  \phi^{-\left(  m+1\right)  }\left(
\left(  \overline{g}+3i\phi_{0}\right)  \phi^{-1}A_{\alpha}+\left(
\overline{g}-i\phi_{0}\right)  \phi^{-1}B_{\alpha}-i3\phi_{0}C_{\alpha
}\right)  \right)  _{\overline{\alpha}}\right] \\
=  &  \phi^{-\left(  m+1\right)  }\operatorname{Re}\left[  \left(
\overline{g}+3i\phi_{0}\right)  \left(  \left\vert \phi_{\alpha\beta
}\right\vert ^{2}+Q\right)  +\left(  \overline{g}-i\phi_{0}\right)  \left\vert
B_{\alpha\overline{\beta}}\right\vert ^{2}+\phi^{-2}\left\vert A_{\alpha
}+B_{\alpha}\right\vert ^{2}\right. \\
&  \left.  -2\phi^{-1}\left(  A_{\alpha}+B_{\alpha}\right)  C_{\overline
{\alpha}}+3\left\vert C_{\alpha}\right\vert ^{2}\right] \\
=  &  \phi^{-\left(  m+1\right)  }\left[  \left(  \frac{1}{2}+\frac{1}{2}%
\phi+\phi^{-1}\left\vert \partial\phi\right\vert ^{2}\right)  \left(
\left\vert \phi_{\alpha\beta}\right\vert ^{2}+\left\vert B_{\alpha
\overline{\beta}}\right\vert ^{2}+Q\right)  +\phi^{-2}\left\vert A_{\alpha
}+B_{\alpha}\right\vert ^{2}\right. \\
&  \left.  +3\left\vert C_{\alpha}\right\vert ^{2}+2\phi^{-1}\operatorname{Re}%
\left(  B_{\alpha}-A_{\alpha}\right)  C_{\overline{\alpha}}\right]  .
\end{align*}
It is then elementary to show that this equals the RHS.
\end{proof}

\begin{remark}
The divergence formula (4.2) in \bigskip\cite{JL} has been used recently by Ma
and Ou \cite{MO} to show that the following equation on the Heisenberg group
$\mathbb{H}^{m}$%
\[
-\Delta_{b}u=u^{q},
\]
where $q<\left(  m+2\right)  /m$, has no positive solution.
\end{remark}

With this identity, we can now prove the following.

\begin{theorem}
\label{otcr}Let $\left(  M^{2m+1},\theta\right)  $ be a closed pseudohermitian
manifold with $A_{\alpha\beta}=0$ and $R_{\alpha\overline{\beta}}\geq
\frac{m+1}{2}$. Suppose $f>0$ satisfies the following equation on $M$%
\[
-\Delta_{b}f+\lambda f=f^{\left(  m+2\right)  /m},
\]
where $\lambda>0$ is a constant. If $\lambda\leq m^{2}/4$, then $f$ is
constant unless $\lambda=m^{2}/4$ and $\left(  M,\theta\right)  $ is isometric
to $\left(  \mathbb{S}^{2m+1},\theta_{c}\right)  $ and in this case
\[
f=c_{m}\left\vert \cosh t+\left(  \sinh t\right)  z\cdot\overline{\xi
}\right\vert ^{-1/m}%
\]
for some $t>0,\xi\in\mathbb{S}^{2m+1}$.
\end{theorem}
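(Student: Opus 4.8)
The plan is to mirror the Riemannian argument of Theorem \ref{otr}, with the reconstructed Jerison--Lee identity of Theorem \ref{JLn} playing the role that the Bochner computation plays there. First I would reduce to the critical value $\lambda=m^{2}/4$. Exactly as in the Riemannian case, replacing $\theta$ by the constant multiple $c\theta$ with $c=4\lambda/m^{2}\le 1$ converts the equation into $-\tfrac{4}{m^{2}}\Delta_{b}f+f=f^{(m+2)/m}$ after rescaling $f$ by a constant, while the Webster--Ricci bound improves to $R_{\alpha\overline{\beta}}\ge c^{-1}\tfrac{m+1}{2}\ge\tfrac{m+1}{2}$, with \emph{strict} inequality precisely when $\lambda<m^{2}/4$. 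Thus it suffices to analyze the normalized equation, for which $\phi=f^{-2/m}$ satisfies (\ref{GE}) and Theorem \ref{JLn} applies verbatim.

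Next I would integrate the identity of Theorem \ref{JLn} over the closed manifold $M$. Since the left-hand side is the real part of a divergence $\operatorname{Re}(V_{\alpha})_{,\overline{\alpha}}$ of a $(1,0)$ vector field and $A_{\alpha\beta}=0$, the divergence theorem gives $\int_{M}\operatorname{Re}(V_{\alpha})_{,\overline{\alpha}}=0$, so the integral of the right-hand side must vanish. Every term there is manifestly nonnegative: the factors $\tfrac12+\tfrac12\phi$ and $\tfrac12+\tfrac12\phi+\phi^{-1}|\partial\phi|^{2}$ are positive because $\phi>0$; the squared norms are nonnegative; and the curvature term $Q=R_{\alpha\overline{\beta}}\phi_{\beta}\phi_{\overline{\alpha}}-\tfrac{m+1}{2}|\partial\phi|^{2}$ is $\ge 0$ by the hypothesis $R_{\alpha\overline{\beta}}\ge\tfrac{m+1}{2}$. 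Hence the integrand is identically zero, forcing pointwise on $M$
\[
\phi_{\alpha\beta}=0,\qquad B_{\alpha\overline{\beta}}=0,\qquad C_{\alpha}=0,\qquad Q=0.
\]

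Now I separate two cases. In the subcritical case $\lambda<m^{2}/4$ the Ricci inequality is strict after scaling, so $Q>0$ wherever $\partial\phi\neq 0$; combined with $Q\equiv 0$ this yields $\partial\phi\equiv 0$, so $\phi$, and hence $f$, is constant. In the critical case, if $\phi$ is nonconstant we are left with the overdetermined system $\phi_{\alpha\beta}=0$, $B_{\alpha\overline{\beta}}=0$, $C_{\alpha}=0$ together with $Q=0$. This is precisely the system the model solution satisfies on $(\mathbb{S}^{2m+1},\theta_{c})$, as verified in Section 3. The concluding step is a rigidity argument: a nonconstant $\phi$ obeying these tensor equations must force $(M,\theta)$ to be CR isometric to the standard sphere and $\phi$ to coincide with the model function $c_{m}|\cosh t+(\sinh t)\,z\cdot\overline{\xi}|^{2}$, whence $f=\phi^{-m/2}$ has the stated form.

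I expect this last rigidity step to be the main obstacle, just as the passage from $D^{2}v=\tfrac1n(\Delta v)g$ to the sphere is the delicate point in Theorem \ref{otr}. Here one must integrate the overdetermined CR system --- a pseudohermitian analogue of the Obata--Tashiro equations --- and recover the CR structure of $\mathbb{S}^{2m+1}$: the vanishing of $\phi_{\alpha\beta}$ and $C_{\alpha}$ should control the dependence of $\phi$ along the Reeb direction, $B_{\alpha\overline{\beta}}=0$ governs the horizontal Hessian, and $Q=0$ forces equality in the Ricci bound along $\partial\phi$, together pinning down the Einstein/sphere model and the explicit form of $f$.
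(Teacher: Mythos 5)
Your proposal follows essentially the same route as the paper: reduce to $\lambda=m^{2}/4$ by a constant scaling of $\theta$, integrate the identity of Theorem \ref{JLn} over the closed manifold so the divergence vanishes, use $Q\geq 0$ to force every term on the right-hand side to vanish pointwise, and then invoke rigidity for the overdetermined system $\phi_{\alpha\beta}=0$, $B_{\alpha\overline{\beta}}=0$, $C_{\alpha}=0$, $R_{\alpha\overline{\beta}}\phi_{\beta}=\frac{m+1}{2}\phi_{\alpha}$. The rigidity step you flag as the main obstacle is precisely where the paper appeals to the arguments of \cite{W}, so your outline is the paper's proof.
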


It suffices to prove it for $\lambda=m^{2}/4$ as the case when $\lambda
<m^{2}/4$ then follows by scaling $\theta$ as in the proof of Theorem
\ref{otr}. Therefore in the following we assume $\lambda=m^{2}/4$. By scaling
$f$ we consider the equivalent equation $\frac{4}{m^{2}}\Delta_{b}%
f+f=f^{\left(  m+2\right)  /m}$. By our previous discussion, $\phi:=f^{-m/2}$
satisfies the identity in Theorem \ref{JLn}. Under our assumption, $Q\geq0$.
The RHS of the identity is nonnegative while the LHS is a divergence.
Therefore, all the terms on the RHS must vanish. In particular,
\[
\phi_{\alpha\beta}=0,B_{\alpha\overline{\beta}}=0,R_{\alpha\overline{\beta}%
}\phi_{\beta}=\frac{m+1}{2}\phi_{\alpha}.
\]
The arguments in \cite{W} can then be applied to finish the proof.

\section{Some open problems}

In the introduction, we have alluded to a more general uniqueness result in
the Riemannian case, which can be stated as follows.

\begin{theorem}
\label{vi}(\cite{BVV, I} ) Let $\left(  M^{n},g\right)  $ be a smooth compact
Riemannian manifold with a (possibly empty) convex boundary. Suppose $u\in
C^{\infty}\left(  M\right)  $ is a positive solution of the following equation%
\[%
\begin{array}
[c]{ccc}%
-\Delta u+\lambda u=u^{q} & \text{on} & M,\\
\frac{\partial u}{\partial\nu}=0 & \text{on} & \partial M,
\end{array}
\]
where $\lambda>0$ is a constant and $1<q\leq\left(  n+2\right)  /\left(
n-2\right)  $. If $Ric\geq\left(  n-1\right)  g$ and $\lambda\left(
q-1\right)  \leq n$, then $u$ must be constant unless $q=\left(  n+2\right)
/\left(  n-2\right)  ,\lambda=n\left(  n-2\right)  /4$ and $\left(
M,g\right)  $ is isometric to $\left(  \mathbb{S}^{n},\frac{4\lambda}{n\left(
n-2\right)  }g_{\mathbb{S}^{n}}\right)  $ or $\left(  \mathbb{S}_{+}^{n}%
,\frac{4\lambda}{n\left(  n-2\right)  }g_{\mathbb{S}^{n}}\right)  $. In the
latter case $u$ is given on $\mathbb{S}^{n}$ or $\mathbb{S}_{+}^{n}$\ by the
following formula%
\[
u(x)=c_{n}\left(  \cosh t+\left(  \sinh t\right)  x\cdot\xi\right)  ^{-\left(
n-2\right)  /2}.
\]
for some $t\geq0$ and $\xi\in\mathbb{S}^{n}$.
\end{theorem}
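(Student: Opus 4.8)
The plan is to set aside the pointwise maximum-principle argument used for Theorem \ref{otr}, since the auxiliary quantity $\phi$ there is adapted to the critical exponent and to an exact first integral that is no longer available when $q<(n+2)/(n-2)$. Instead I would run an \emph{integrated}, weighted Bochner (Reilly) identity carrying a free power weight $u^{-2a}$, and then optimize over $a$; the payoff of the extra parameter is that the sharp threshold $\lambda(q-1)\le n$ emerges as the exact condition under which the resulting integral quantity becomes sign-definite. The unweighted Bochner ($a=0$) is genuinely insufficient: writing $\Delta u=\lambda u-u^{q}$ and integrating over $M$ yields only
\[
\Big(\tfrac{2\lambda}{n}+1\Big)\!\int_M u^{q+1}\le \lambda\Big(\tfrac{\lambda}{n}+1\Big)\!\int_M u^{2}+\tfrac1n\!\int_M u^{2q},
\]
which is compatible with $\big(\int u^{q+1}\big)^2\le\int u^2\int u^{2q}$ by Cauchy--Schwarz and so produces no contradiction. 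The weight is therefore essential.

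Concretely, I would first record the elementary consequences of the equation, namely $\langle\nabla u,\nabla\Delta u\rangle=(\lambda-qu^{q-1})|\nabla u|^2$, and then apply the Bochner formula multiplied by $u^{-2a}$ (equivalently, apply Bochner to $v=u^{\beta}$ for a suitable $\beta=\beta(a)$) and integrate. Integrating by parts twice converts $\int u^{-2a}\,\tfrac12\Delta|\nabla u|^2$ into terms of the form $\int u^{-2a}\big(\psi_1(u)\,\Delta u+\psi_2(u)|\nabla u|^2\big)|\nabla u|^2$; the indefinite quartic gradient term $\int u^{-2a-2}|\nabla u|^4$ is then reabsorbed through the auxiliary identity
\[
\int_M h(u)|\nabla u|^2\Delta u=-\int_M h'(u)|\nabla u|^4-2\int_M h(u)\,\nabla^2u(\nabla u,\nabla u),
\]
which trades it for Hessian terms and, via the equation, for pure powers of $u$. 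At this stage I would invoke the two geometric inputs pointwise, $|\nabla^2u|^2\ge(\Delta u)^2/n$ (equivalently, the trace-free Hessian of $v$ is retained as a nonnegative error term) and $\operatorname{Ric}(\nabla u,\nabla u)\ge(n-1)|\nabla u|^2$. The boundary contributions are handled exactly as in the critical case: since $\partial u/\partial\nu=0$ forces $\nabla u$ to be tangential and also $\partial_\nu(u^{-2a})=0$, Reilly's formula leaves only $\int_{\partial M}u^{-2a}\,\Pi(\nabla^{\partial}u,\nabla^{\partial}u)$, which has the favorable sign by convexity $\Pi\ge0$.

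The crux is the algebra of the optimization. After the reductions above, everything assembles into a relation of the schematic form
\[
0\ \ge\ \int_M u^{-2a}\Big|\nabla^2u-\tfrac{\Delta u}{n}g\Big|^2+\int_M u^{-2a}\big[\operatorname{Ric}-(n-1)g\big](\nabla u,\nabla u)+\mathcal{Q}_a(u),
\]
where $\mathcal{Q}_a$ is a weighted quadratic expression in the gradient and in powers of $u$ whose coefficients depend polynomially on $a$, $\lambda$, $q$ and $n$. I would choose $a$ to eliminate the indefinite cross terms in $\mathcal{Q}_a$ and render it a nonnegative combination; tracking the discriminant of the remaining quadratic shows that a nonnegative completion exists precisely when $\lambda(q-1)\le n$. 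Consequently every term on the right must vanish. The main obstacle I anticipate is purely this bookkeeping: pinning down the single correct exponent $a$ and verifying that the leftover quadratic is a sum of squares exactly at the stated threshold, while keeping the boundary terms aligned in sign throughout.

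Finally, the rigidity analysis. Vanishing of the error terms forces $\nabla^2v=\tfrac{\Delta v}{n}g$ for the relevant power $v=u^{\beta}$, together with $\operatorname{Ric}(\nabla v,\nabla v)=(n-1)|\nabla v|^2$ along $\nabla v$. If $v$ is nonconstant, the Obata theorem (in its eigenfunction form) identifies $(M,g)$ with a round sphere, or with a hemisphere after doubling across the convex boundary, and forces $v$ to be an affine function of a first eigenfunction. Substituting this rigid $v$ back into $-\Delta u+\lambda u=u^{q}$ is consistent only when $q=(n+2)/(n-2)$ and $\lambda=n(n-2)/4$ (after the normalization of the metric to $\tfrac{4\lambda}{n(n-2)}g_{\mathbb{S}^n}$), at which point $u$ is exactly the bubble $c_n(\cosh t+(\sinh t)\,x\cdot\xi)^{-(n-2)/2}$. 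For strictly subcritical $q$ this consistency fails, so the trace-free Hessian cannot vanish nontrivially and $u$ must be constant, which is the asserted dichotomy.
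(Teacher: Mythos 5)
A point of context first: the paper does not actually prove Theorem \ref{vi}. It is quoted from Bidaut-V\'eron--V\'eron \cite{BVV} and Ilias \cite{I}, whose method the paper describes as ``a sophisticated integration by parts''; the paper's own maximum-principle argument (Section 2, Theorem \ref{otr}) covers only the critical exponent $q=(n+2)/(n-2)$, and the paper explicitly poses as an open problem whether that method can be sharpened to give the full statement. So your proposal can only be measured against the cited method, and in outline it matches it: applying Bochner to a power $v=u^{\beta}$, integrating against a weight $u^{-2a}$, absorbing the quartic gradient term through the auxiliary identity, retaining the trace-free Hessian and the Ricci excess as nonnegative errors, and using convexity plus the Neumann condition to control the boundary term is the correct skeleton of the BVV/Ilias argument; your observation that the unweighted case $a=0$ cannot suffice is also a sound sanity check.

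As a proof, however, the proposal has a genuine gap, and it sits exactly where the theorem's difficulty is concentrated. The entire content of the result is the algebraic fact that the free exponent $a$ (equivalently $\beta$) can be chosen so that, after all integrations by parts, the residual quadratic expression $\mathcal{Q}_a(u)$ admits a nonnegative completion precisely when $\lambda(q-1)\le n$, with vanishing forcing rigidity. You assert this (``tracking the discriminant \ldots shows that a nonnegative completion exists precisely when $\lambda(q-1)\le n$'') but never carry it out, and you flag it yourself as the anticipated obstacle; without that computation the argument is a plan, not a proof. A concrete symptom of the missing step: your sketch nowhere indicates where the hypothesis $q\le(n+2)/(n-2)$ enters the sign analysis, yet it must, since the statement fails for supercritical exponents; in \cite{BVV} subcriticality and $\lambda(q-1)\le n$ are used at distinct points, one in fixing the admissible weight and one in the sign of the completed square. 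The rigidity endgame inherits the same uncertainty: in the strictly subcritical case one expects constancy of $u$ to come directly from a strictly positive leftover coefficient multiplying $\int_M u^{-2a}\left\vert \nabla u\right\vert ^{2}$, rather than from substituting the Obata-rigid $v$ back into the equation, and deciding which of these occurs requires the very identity you have not derived.
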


Our Theorem \ref{otr} corresponds to the special case $q=\left(  n+2\right)
/\left(  n-2\right)  $. It would be interesting if the method presented in
Section 2 can be sharpened to give a new proof of the above theorem in it full generality.

It is natural to wonder if there is a CR analogue of Theorem \ref{vi}. We are
attempted to make the following conjecture.

\begin{conjecture}
Let $\left(  M^{2m+1},\theta\right)  $ be a closed pseudohermitian manifold
with $A_{\alpha\beta}=0$ and $R_{\alpha\overline{\beta}}\geq\frac{m+1}{2}$.
Suppose $f>0$ satisfies the following equation on $M$%
\[
-\Delta_{b}f+\lambda f=f^{q},
\]
where $\lambda>0$ is a constant and $1<q<\left(  m+2\right)  /m$. If
$\lambda\left(  q-1\right)  \leq m/2$, then $f$ is constant unless $q=\left(
m+2\right)  /m,\lambda=m^{2}/4$ and $\left(  M,\theta\right)  $ is isometric
to $\left(  \mathbb{S}^{2m+1},\theta_{c}\right)  $ and in this case
\[
f=c_{m}\left\vert \cosh t+\left(  \sinh t\right)  z\cdot\overline{\xi
}\right\vert ^{-1/m}%
\]
for some $t>0,\xi\in\mathbb{S}^{2m+1}$.
\end{conjecture}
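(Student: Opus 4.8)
The plan is to seek a subcritical analogue of the weighted divergence identity of Theorem \ref{JLn} and then to exploit it \emph{integrally} rather than pointwise, in the spirit of the integration-by-parts method of Bidaut-V\'eron--V\'eron and Ilias behind Theorem \ref{vi}. First I would replace the critical substitution $\phi=f^{-2/m}$ by the power adapted to the exponent $q$, namely $\phi=f^{-(q-1)}$; setting $\beta=1/(q-1)$ this is the unique choice making $f^{q}=\phi^{-1}f$, and it reduces to $\phi=f^{-2/m}$ exactly when $q=(m+2)/m$. Under it the equation $-\tfrac{4}{m^{2}}\Delta_{b}f+\lambda f=f^{q}$ transforms into one of the same shape as (\ref{GE}),
\[
\phi_{\alpha,\overline{\alpha}}=a\,i\phi_{0}+b\left(1-\mu\phi\right)+c\,\phi^{-1}\left\vert \partial\phi\right\vert ^{2},
\]
whose coefficients $a,b,c,\mu$ depend on $m,q,\lambda$ and collapse to those of (\ref{GE}) precisely at $q=(m+2)/m$, $\lambda=m^{2}/4$. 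I would then keep the three tensors $\phi_{\alpha\beta}$, $B_{\alpha\overline{\beta}}$, $C_{\alpha}$, with coefficients re-tuned to $a,b,c$, and recompute their divergences as in Lemmas \ref{basicid}--\ref{divM}. The essential new feature is that these divergences no longer close to a perfect combination: each carries an extra remainder proportional to the subcriticality $\delta:=2/m-(q-1)\geq 0$, assembled from $A_{\alpha},B_{\alpha},C_{\alpha}$ and $\phi_{\alpha}$.

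The reason to pass to a global statement is that the one quantity we could not control pointwise becomes harmless after integration. Because the Reeb field satisfies $\mathcal{L}_{T}\theta=0$, it preserves the volume form $\theta\wedge(d\theta)^{m}$, so $\int_{M}T(h)\,dV=0$ for every smooth $h$; taking $h=\phi^{-s}\phi_{0}$ gives $\int_{M}\phi^{-s}\phi_{0,0}\,dV=s\int_{M}\phi^{-s-1}\phi_{0}^{2}\,dV\geq 0$ by a single integration by parts in the $T$-direction. Thus the second Reeb-derivative term $S$ of Lemma \ref{divM}, which in the pointwise identity had to be annihilated by taking a real part, instead contributes a \emph{sign-definite} term once integrated against a weight $\phi^{-s}$. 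My plan is therefore to integrate the re-tuned identity (or, more flexibly, a free linear combination of the weighted divergences of $A_{\alpha},B_{\alpha},C_{\alpha}$) against $\phi^{-s}$ over the closed manifold; the left-hand side is the real part of a divergence and drops out, leaving $0=\int_{M}\phi^{-s}\left[\text{(sum of squares)}+\left(\tfrac{1}{2}+\tfrac{1}{2}\phi+\phi^{-1}\left\vert \partial\phi\right\vert ^{2}\right)Q+\delta\,\mathcal{R}\right]dV$, where $Q=R_{\alpha\overline{\beta}}\phi_{\beta}\phi_{\overline{\alpha}}-\tfrac{m+1}{2}\left\vert \partial\phi\right\vert ^{2}\geq 0$ by the hypothesis $R_{\alpha\overline{\beta}}\geq\tfrac{m+1}{2}$, and $\mathcal{R}$ is the subcritical remainder.

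The main obstacle will be to show that the total integrand is nonnegative for every admissible $q$, which is exactly where the sharp hypothesis $\lambda(q-1)\leq m/2$ must enter. Unlike the critical case, the remainder $\delta\,\mathcal{R}$ consists of cross terms with no obvious sign, so it must be absorbed into the squares $\left\vert \phi_{\alpha\beta}\right\vert ^{2}$, $\left\vert B_{\alpha\overline{\beta}}\right\vert ^{2}$, $\left\vert \phi^{-1}A_{\alpha}-C_{\alpha}\right\vert ^{2}$, $\left\vert \phi^{-1}B_{\alpha}+C_{\alpha}\right\vert ^{2}$, $\left\vert C_{\alpha}\right\vert ^{2}$ via Cauchy--Schwarz, after first using the integration by parts above to trade the Reeb term and the gradient terms for $\int\phi^{-s-1}\phi_{0}^{2}$ and multiples of $\int\phi^{-s}Q$. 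I expect the delicate bookkeeping to be the choice of the weight exponent $s$ (the natural guess is $s=m+1$ as in Theorem \ref{JLn}, possibly shifted by a $q$-dependent amount) together with the combination coefficients, so that the constant $m/2$ is precisely the threshold leaving just enough room to complete the square; it is conceivable that a genuinely new square term must be introduced to close the estimate, and this is the step I regard as hardest.

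Granting nonnegativity, vanishing of the integral forces $\phi_{\alpha\beta}=0$, $B_{\alpha\overline{\beta}}=0$, $C_{\alpha}=0$, and $R_{\alpha\overline{\beta}}\phi_{\beta}=\tfrac{m+1}{2}\phi_{\alpha}$. When $q<(m+2)/m$ the strictly positive defect $\delta>0$ then forces $\partial\phi\equiv 0$, so $f$ is constant; the borderline $q=(m+2)/m$ returns us to Theorem \ref{otcr} and the rigidity classification of \cite{W}, which identifies the sphere case and the stated family for $f$.
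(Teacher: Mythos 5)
You should first be aware that the paper does not prove this statement at all: it is posed explicitly as an open conjecture, and the author's only remark on a proof is that ``a possible approach to the above conjecture is to modify the Jerison-Lee identity,'' with progress deferred to future work. Your proposal is therefore not being measured against a hidden argument but against the standard of completeness, and by that standard it has a genuine gap which you yourself identify: the entire content of the conjecture is concentrated in the step you ``regard as hardest,'' namely showing that after the subcritical substitution $\phi=f^{-(q-1)}$ the remainder $\delta\,\mathcal{R}$ can be absorbed into the square terms precisely under the sharp hypothesis $\lambda(q-1)\le m/2$. Everything before that point (the substitution, recomputing Lemmas \ref{basicid}--\ref{divM} with re-tuned coefficients, integrating against a weight and discarding the divergence) is routine bookkeeping, and everything after it (vanishing of $\phi_{\alpha\beta}$, $B_{\alpha\overline{\beta}}$, $C_{\alpha}$, the rigidity via \cite{W}) is borrowed from the critical case, Theorem \ref{otcr}. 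What is missing in the middle is exactly the open problem, so what you have written is a research program, not a proof.

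Two concrete warnings about the program itself. First, your claim that the Reeb term ``contributes a sign-definite term once integrated'' points the wrong way: in Lemma \ref{divM} the quantity $S$ contains $-\frac{m}{2}\phi_{0,0}$, so integrating against the positive weight $\phi^{-s}$ and using your own identity $\int_{M}\phi^{-s}\phi_{0,0}\,dV=s\int_{M}\phi^{-s-1}\phi_{0}^{2}\,dV$ produces the contribution $-\frac{ms}{2}\int_{M}\phi^{-s-1}\phi_{0}^{2}\,dV\le 0$, which must itself be absorbed rather than helping; the remaining part of $S$ contains $\frac{1-\phi^{2}}{4}$ and has no sign. Note also that in Theorem \ref{JLn} the $S$-term is eliminated not by integration but because it enters multiplied by $i\phi_{0}$ and is purely imaginary; once you insert the weight $\phi^{-s}$ and pass to a ``free linear combination,'' that cancellation mechanism is destroyed, and weighting the identity forces extra cross terms of the form $s\int_{M}\phi^{-s-1}\phi_{\overline{\alpha}}V_{\alpha}\,dV$ whose absorption is part of the same unresolved bookkeeping. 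Second, even granting nonnegativity of the integrand, your final step --- that $\delta>0$ forces $\partial\phi\equiv 0$ --- presupposes that the remainder $\mathcal{R}$ retains a definite multiple of $|\partial\phi|^{2}$ after the Cauchy--Schwarz absorptions; this is an additional structural claim that must be verified, not assumed. Until the absorption step is carried out with explicit constants, the conjecture remains open, exactly as the paper leaves it.
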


The case $q=\left(  m+2\right)  /m$ is exactly our Theorem \ref{otcr}.

The Conjecture, if true, would imply the following sharp Sobolev inequality on
any closed pseudohermitian manifold $\left(  M^{2m+1},\theta\right)  $ with
$A_{\alpha\beta}=0$ and $R_{\alpha\overline{\beta}}\geq\frac{m+1}{2}$: for
\thinspace$1\leq q\leq\left(  m+2\right)  /m$%
\[
\left(  \frac{1}{V}\int_{M}\left\vert F\right\vert ^{q+1}dv\right)
^{2/\left(  q+1\right)  }\leq\frac{2\left(  q-1\right)  }{m}\frac{1}{V}%
\int_{M}\left\vert \nabla_{b}F\right\vert ^{2}dv+\frac{1}{V}\int_{M}\left\vert
F\right\vert ^{2}dv.
\]
This family of inequalities was proved on $\left(  \mathbb{S}^{2m+1}%
,\theta_{c}\right)  $ by Frank and Lieb \cite{FL} as a corollary of their
sharp Hardy-Littlewood integral inequality on the Heisenberg group. It is
interesting to know if it holds in a more general setting.

A possible approach to the above conjecture is to modify the Jerison-Lee
identity. We hope to report progress on this front in the future.

\end{document}